\newtheorem{theorem}{Theorem}
\newtheorem{proposition}{Proposition}[section]
\newtheorem{lemma}[proposition]{Lemma}
\newtheorem{corollaire}[proposition]{Corollary}
\newtheorem{definition}[proposition]{Definition}
\newtheorem{example}[proposition]{Example}
\newtheorem{remarque}[proposition]{Remark}
\newcommand{\bl}{\begin{lemma}}
\newcommand{\bp}{\begin{proposition}}
\newcommand{\bt}{\begin{theorem}}
\newcommand{\bc}{\begin{corollaire}}
\newcommand{\be}{\begin{equation}}
\newcommand{\bee}{\begin{equation*}}
\newcommand{\bd}{\begin{definition}}
\newcommand{\bdp}{\begin{definitionproposition}}
\newcommand{\bex}{\begin{example}}
\newcommand{\br}{\begin{remarque}}
\newcommand{\bpr}{\begin{proof}}
\newcommand{\el}{\end{lemma}}
\newcommand{\ep}{\end{proposition}}
\newcommand{\et}{\end{theorem}}
\newcommand{\ec}{\end{corollaire}}
\newcommand{\ee}{\end{equation}}
\newcommand{\eee}{\end{equation*}}
\newcommand{\ed}{\end{definition}}
\newcommand{\edp}{\end{definitionproposition}}
\newcommand{\eex}{\end{example}}
\newcommand{\er}{\end{remarque}}
\newcommand{\epr}{\end{proof}}
\newcommand{\secref}[1]{Section~\ref{#1}}
\newcommand{\thmref}[1]{Theorem~\ref{#1}}
\newcommand{\propref}[1]{Proposition~\ref{#1}}
\newcommand{\lemref}[1]{Lemma~\ref{#1}}
\newcommand{\corref}[1]{Corollary~\ref{#1}}
\newcommand{\remref}[1]{Remark~\ref{#1}}
\newcommand{\exemref}[1]{Example~\ref{#1}}
\newcommand{\parref}[1]{Paragraph~\ref{#1}}
\def\ov{\overline}
\def\menos{\backslash}
\renewcommand{\int}[1]{{\rm int} (#1)}
\def\1{{\boldsymbol 1}}
\def\gd{{\mathfrak{d}}}
\def\gC{{\mathfrak{C}}}
\def\gH{{\mathfrak{H}}}
\def\tc{{\mathtt c}}
\def\tv{{\mathtt v}}
\def\tw{{\mathtt w}}
\def\tu{{\mathtt u}}
\def\tn{{\mathtt n}}
\def\ts{{\mathtt s}}
\def\tx{{\mathtt x}}
\def\ty{{\mathtt y}}
\def\tN{{\widetilde{N}}}
\def\crH{{\mathscr H}}
\def\EE{\mathcal E}
\def\T{\mathcal G}
\def\H{\mathcal H}
\def\T{\mathcal T}
\def\I{\mathcal I}
\def\SS{\mathcal S}
\def\SSsing{\mathcal S^{sing}}
\def\SSreg{\mathcal S^{reg}}
\def\TT{\mathcal T}
\def\TTreg{\mathcal T^{reg}}
\def\RR{\mathcal R}
\def\N {\mathbb N}
\def\R {\mathbb R}
\def\Z {\mathbb Z}
\renewcommand\S {\mathbb S}
\renewcommand\L {\mathcal L}
\def\ker{{\rm Ker\,}}
\def\Id{{\rm Id\,}}
\def\coker{{\rm Coker\,}}
\def\im{{\rm Im\,}}
\def\id{{\rm Ide\,}}
\def\Hom{{\rm Hom}}
\def\Ext{{\rm Ext}}
\def\codim{{\rm codim\,}}
\def\pr{{\rm pr}}
\def\depth{{\rm depth \,}}
\def\rc{{\mathring{\tc}}}
\def\reg{{\rm reg}}
\def\sing{{\rm sing}}
 \newcommand\rojo {\color{black}}
    \newcommand\negro {\color{black}}
\newcommand{\bi}[2]{{#1}^{^{#2}}}
\newcommand{\dos}[2]{{#1}_{_{#2}}}
\newcommand{\tres}[3]{{#1}^{^{ #2 }}_{_{#3}}}
\newcommand{\Hiru}[3]{{#1}^{^{#2}}{( #3 )}}
\newcommand{\hiru}[3]{{#1}_{_{#2}}{( #3 )}}
\newcommand{\lau}[4]{{#1}^{^{#2}}_{_{#3}}{\left( #4 \right)}}
\newcommand{\IH}{\mathscr H}
\title[Refinement invariance]{Refinement invariance of  intersection (co)homologies}
\date{\today}
\author{Martintxo Saralegi-Aranguren}
\address{      Universit\'e d'Artois\\
      UR 2462 \\
Laboratoire de Math{\'e}matiques de Lens (LML)\\ 
F-62300 Lens, France}
\email{martin.saraleguiaranguren@univ-artois.fr}
\thanks{The author would like to thank Daniel Tanré for many helpful suggestions and comments in the preparation of this work}
\begin{document} 

\begin{abstract} 
We prove the refinement invariance of several intersection (co)homo\-logies existing  in the literature:  Borel-Moore, Blown-up, the classical one, $\ldots$
These (co)homo\-logies have been introduced in order to establish the Poincaré  Duality in various contexts.
In particular, we retrieve the classical topological invariance of the intersection homology as well as several refinement invariance results already known.
\end{abstract}

\maketitle

Let us consider a topological space $X$ supporting two stratifications $\SS$ and $\mathcal T$.
We say that $(X, \SS)$ is a \emph{refinement} of  $(X, \mathcal T)$ if each stratum of $\mathcal T$ is a union of strata of $\SS$.
In this work we answer  the following question about the invariance property of the intersection homology:

\begin{center}
\em 
Can we find two perversities $\ov p
$ and $\ov q$ 
such that 

the identity $I \colon X \to X$ induces the isomorphism 
\begin{equation}\label{eq:Gol}
\lau  H {\ov p} * {X, \SS} \cong  \lau H {\ov q} *{X, \mathcal T}?
\end{equation}
\end{center}

\medskip

For pseudomanifolds and using the original  \emph{Goresky-MacPherson} perversities, an answer comes directly from the topological invariance of the intersection homology  \cite[Corollary pag. 148]{MR572580} (see also \cite[Theorem 9]{MR800845}): it suffices to take $\ov p = \ov q$. In other words, the intersection homology does not depend on the chosen stratification.
We  work in a more general setting.

 \smallskip 
 
 $\bullet${ \bf Spaces}. We do not work with pseudomanifolds, but with the more general notion of 
 CS-set (cf. \secref{sec:CS}).  They are 
locally cone-like spaces, but their links are not necessarily  pseudomanifolds.

\medskip

$\bullet${ \bf Perversities} (cf. \parref{subsec:perv}). We deal with the more  general notion of perversity introduced by MacPherson in \cite{RobertSF}: the \emph{M-perversities}. This kind of  perversity $\ov p$ associates a number $\ov p (S) \in \ov \Z = \Z \sqcup\{-\infty,\infty\}$ to any stratum $S$ of the CS-set, while a \emph{classical perversity} $\ov p$ associates a number $\ov p(\codim S)$ to the codimension of the stratum.
An $M$-perversity strongly depends on the stratification and so the topological invariance of the related intersection homology does not apply.

\smallskip

$\bullet${ \bf (Co)homologies} (cf. \secref{homcoho}). 
We consider not only the intersection homology $\tres H  {\ov p}  *$, but also the following:

\indent  {\bf + Intersection cohomologies  $\tres H * {\ov p}, \tres H {*} {\ov p,c}    $.}  The intersection cohomology $\tres H * {\ov p}$ is the cohomology of the complex defined by using the functor $\Hom$ over intersection chains.
The version with compact supports is $\tres H {*} {\ov p,c}$.

\indent  {\bf  + Tame intersection homology $\tres \gH  {\ov p}*$.} A variation of the intersection homology avoiding intersection chains who live in singular strata.
We have $\tres \gH  {\ov p}* = \tres H  {\ov p} *$ when $\ov p$ is smaller than the  top perversity $  \ov t$.
This homology is isomorphic to the blown-up intersection cohomology with compact supports $\tres \IH * {\ov p,c}$ through the Poincar\'e duality \cite{CST7} when one works with pseudomanifolds.

\indent  {\bf + Tame intersection cohomologies $\tres \gH *{\ov p} , \tres \gH *{\ov p,c}  $.}   
 The tame  intersection cohomology $\tres \gH *{\ov p}$  is the cohomology of the complex defined  by using the functor $\Hom$ over  tame intersection chains.
  The version with compact supports is $\tres \gH {*} {\ov p,c}$.
   We have $\tres \gH  * {\ov p} = \tres H * {\ov p}$ and $ \tres \gH  * {\ov p,c} = \tres H * {\ov p,c}$ when $\ov p \leq \ov t$.
The cohomology $\tres \gH *{\ov p,c}  $ is dual to $\tres \gH {D\ov p} {n-*}$ through the Poincar\'e duality \cite{MR3046315,LibroGreg} when the coefficient is a field\footnote{ In fact, following \cite{MR699009}, \emph{locally $\ov p$-torsion free.} } and one works with pseudomanifolds.
  The cohomology  $\tres \gH *{\ov p}$ is isomorphic to the de Rham intersection cohomology through the de Rham duality  \cite{MR2210257}. Here, we use real coefficients.

\indent  {\bf + Borel-Moore intersection homology $\tres H {BM,\ov p} * $.}
Similar to the intersection homology using locally finite chains instead of finite chains.

\indent  {\bf  + Borel-Moore tame intersection homology $\tres \gH {BM,\ov p} * $.}
Similar to $\tres \gH *{\ov p}$ using locally finite chains instead of finite chains.
 We have $\tres \gH  {BM,\ov p}* = \tres H  {BM,\ov p} *$ when $\ov p \leq \ov t$.

\indent  {\bf + Blown-up intersection cohomologies $\tres \IH *{\ov p}, \tres \IH *{\ov p,c}  $.}
These cohomologies are defined by using simplicial cochains, inspired by Dennis Sullivan's approach to rational homotopy theory.
The  compact supports version $\tres \IH *{\ov p,c}$ (resp. closed supports version $\tres \IH *{\ov p}$ )  is isomorphic to the tame intersection homology $\tres \gH {\ov p} {n-*}$ (resp. Borel-Moore  tame intersection homology $\tres \gH {BM,\ov p} {n-*}$)  through the Poincar\'e duality  \cite{CST5} (resp. cf. \cite{ST1})
when one works with pseudomanifolds.
We have $\tres \IH  * {\ov p} = \tres \gH * {D\ov p}$ and $\tres \IH  * {\ov p,c} = \tres \gH * {D\ov p,c}$ when the coefficient ring is a field$^1$ (cf. \cite[Theorem F]{CST5} and \cite[Corollary 13.1]{CST7}). 

\bigskip

In this paper, we give two answers to  the question \eqref{eq:Gol}.

\smallskip

$-$  \emph{Pull-back}. Given a perversity $\ov q$ on $(X,\mathcal T)$  we take $\ov p$ the pull-back perversity $I^\star \ov q$ on $(X,\SS)$ (cf. \parref{subsec:perv}). We prove
$$
\lau  H {I^\star \ov q} * {X, \SS} \cong \lau H {\ov q} *{X, \mathcal T},
$$
and similarly for the other (co)homologies used in this work (cf. \thmref{thm:refi}).

\smallskip

  $-$ \emph{Push-forward}. Given a perversity $\ov p$ on $(X,\mathcal S)$
  we take $\ov q$ the push-forward perversity $I_\star \ov p$ on $(X,\TT)$ (cf. \parref{subsec:perv}). We prove
  $$
\lau  H {\ov p} * {X, \SS} \cong \lau H {I_\star\ov p} *{X, \mathcal T},
$$
and similarly for the other (co)homologies used in this work (cf. \thmref{thm:Coars}).
In this case, we need the following conditions on $\ov p$:

\medskip

(K1) \hfill 
 $
 \ov p(Q) \leq \ov p(S) \leq \ov p(Q) + \ov t(S) - \ov t(Q),
 $
 \hfill
 \phantom{----------}
 
\medskip
 
 \noindent for any strata $S,Q \in \SS$ with $S\subset \ov Q$ and $S,Q \subset T$ for some stratum $T \in \TT$,
and

\medskip

(K2) 
\hfill 
$
 \ov p(Q) = \ov p(S) ,
$ 
\hfill
 \phantom{---------------------------------------}
 
 \medskip
 
\noindent for any strata $S,Q \in \SS$ with $\dim S =\dim Q$ and  $S,Q \subset T$ for some stratum 
$T \in \TT$\footnote{	\remref{rem:class} gives a relation between classical perversities and those verifying (K1), (K2).}.

These results encompass some other already known results about  invariance of intersection (co)homology:
\begin{itemize}
\item[-] The topological invariance of $ \tres H  {\ov p}*$  for pseudomanifolds \cite{MR572580} and  CS-sets \cite{MR800845,2019arXiv190406456F}.
\item[-] The topological invariance of  $ \tres  \gH  {\ov p}*$ and  $ \tres  \gH  {BM,\ov p}*$   for pseudomanifolds \cite{MR2209151,MR2507117}.

\item[-] The topological invariance of $ \tres  \crH * {\ov p}$ and  $ \tres  \crH * {\ov p,c}$  for CS-sets \cite{CST5,CST7}.
\item[-]  The refinement invariance of $ \tres H  {\ov p}*$  for PL-pseudomanifolds \cite{MR3175250} using $M$-perversities.

\item[-] The refinement invariance of  $ \tres H  {\ov p}*$ and 
$ \tres \gH  {\ov p}*$ for CS-sets \cite{CST3}  using $M$-perversities.

\end{itemize}

\medskip

Recently, the topological/refinement  invariance of the intersection homology has been extended to the more general setting of the torsion sensitive intersection homology (cf. \cite{2019arXiv190707790F,2019arXiv190406456F}).

\bigskip

We end this introduction by giving 
an idea of the proof of Theorems  \ref{thm:Coars} and \ref{thm:refi}.  The original proof of the classical topological invariance of the intersection homology given by King in  \cite{MR800845}  uses the intrinsic stratification $\SS^*$. He proves that the identity map $I\colon X \to X$ induces an isomorphism between the intersection homology of $(X,\mathcal S)$ and that of $(X,\SS^*)$. This gives the topological invariance since $\SS^* = \TT^*$. 

\rojo The  proof uses the Mayer-Vietoris technique in order to reduce the question to a local one. Near a point $x$ of $X$ the identity $I \colon (X,\SS) \to (X,\SS^*)$ essentially becomes a stratified map 
\begin{equation}\label{WL}
h \colon  \R^m \times \rc W 
\to \R^k \times \rc L
\end{equation}
(see for exemple \cite[Section 5.5]{LibroGreg}). Here,   $W$ is the link of $x$ relatively to $\SS$ and $L$ denotes the link  of $x$ relatively to $\SS^*$. Working with the usual local calculations of intersection homology one proves that $h$ is a quasi-isomorphism for the intersection homology. \negro

In our context, we don't know wether the identity map $I \colon (X,\SS) \to (X,\TT)$ has the nice local description  \eqref{WL}. We proceed in a different way. We construct a finite sequence of CS-sets
\begin{equation}\label{sucesion}
(X,\SS)  = (X,\mathcal R_0)\stackrel I \longrightarrow  (X,\mathcal R_1)
\stackrel I \longrightarrow  \cdots
(X,\mathcal R_{\ell-1} ) \stackrel I \longrightarrow   (X,\mathcal R_\ell) =(X,\mathcal T),
\end{equation}
where each step \rojo $I \colon (X,\mathcal R_{j-1}) \to \mathcal (X,R_j)$,  called	 {\em simple refinement},  is a refinement having the  local description 
\begin{equation}\label{WLBis}
h \colon  \R^m \times \rc (\S^{b-1} * L)
\to \R^k \times \rc L,
\end{equation}
 where $\S^{b-1}$ stands for a $(b-1)$-homological sphere
 (see \propref{Max} for a complete picture). In the simple refinement  $I \colon (X,\mathcal R_{j-1}) \to \mathcal (X,\mathcal R_j)$ the strata of $\mathcal R_{j-1}$ are the same as those of $\mathcal R_{j}$ except the following ones:
 \begin{equation}\label{desapa}
 \{ \R^k \times \{\tw\}  , \R^k \times \S^{b-1} \times ]0,1[\}, \hbox{ dans } \mathcal R_{j-1},
 \hbox{ becoming } \R^{m} \times \{ \tv\},  \hbox{ dans } \mathcal R_{j}.
  \end{equation}
   Here, $\tw$ (resp. $\tv$) is the apex of the cone $\rc W$ (resp. $\rc L$).
 
\negro
Now, we can follow the procedure of \cite{MR800845} in order to get the isomorphism between the intersection homologies of $(X,\SS)$ and $(X,\TT)$.

The construction of this sequence uses the fact  that  any stratum of $S \in\SS$ is included in a stratum $T \in \TT$. This gives the following dichotomy: $S$ is a {\em source stratum}  if $\dim S = \dim T$ and $S$ is a  {\em virtual stratum} if  $\dim S < \dim T$. Somehow, the virtual strata of $\SS$ disappear in $\TT$ while source strata become larger. The first step 			in the construction of the above sequence is to {\em eliminate} the maximal virtual strata of $\SS$. In this way, we obtain in the CS-set $(X,\mathcal R_1)$.
We continue  applying  this principle to the refinement $(X,\mathcal R_1)
\stackrel I \longrightarrow (X,\TT)$ and we eventually get \eqref{sucesion}.

What  do we mean by ``eliminate"? Let us suppose that $S \in \SS$ is the unique virtual stratum. There exist two source strata  $R_0,R_1 \in \SS$ with $T = S \cup R_0 \cup R_1$  (maybe $R_0=R_1$). We replace the strata  $S,R_0,R_1$ of $\SS$ by the stratum $T$ in order to get $\RR_1$, that is, 
$
\mathcal R_1 = \{ Q \in \SS \mid   Q \ne S,R_0,R_1\} \cup \{ T\}
$
  (cf. \rojo \eqref{desapa} \negro or \exemref{ejemplo} for a richer situation).

\medskip

Exceptional strata are singular strata of $\SS$ included in regular strata1 of $\T$.
For example, the apex of the open cone of the sphere $S^0$ (which is indeed an open interval) is an exceptional stratum if we take 
$\T$ the one-stratum stratification of that interval. 
This example is a limit case
 for the refinement invariance results we establish in this work (see \remref{rem:cod1excep}).

\bigskip

The embedding condition asked in \cite{SRef1} gives  that the local description of the refinement 
$I \colon (X,\mathcal R_{j-1}) \to \mathcal (X,\mathcal R_j)$ is  similar, but stronger,   than  \eqref{WLBis}. In fact, it replaces  the homological sphere $\S^{b-1}$ by  the true sphere $S^{b-1}$
(cf. \cite[Proposition 4.4 (c)]{SRef1}.

\bigskip

\noindent {\bf Nota bene.}  There is a mistake in \cite[Paragraph 5.3]{SRef1} which rends incorrect the proof the Corollary 5.10, op. cit. In fact, the author does not know if the intrinsic stratification verifies the strata embedding condition necessary to apply Theorem A, op. cit. 

Nevertheless, the intrinsic stratification is a refinement in the context of this paper and the proof of \corref{Impl2} is correct. In other words, topological invariance can be deduced from refinement invariance.
	
\bigskip

For a topological space $X$, we denote by $\tc X= X \times [0,1]/ (X \times \{ 0\})$ the \emph{cone} on $X$ and $\rc X = X \times [0,1[/ (X \times \{ 0\})$ the \emph{open cone} on $X$. A point of the cone is denoted by $[x,t]$. The apex of the cone is $\tv =[-,0]$.

We shall write
\rojo $S^m$ the $m$-dimensional sphere and $\S^m$ an $m$-dimensional homological sphere,
$m\in \N$.
\negro

\bigskip

\tableofcontents

\section{Intersection homologies and cohomologies (filtered spaces)}\label{homcoho}

\begin{quote}
We present the homologies and cohomologies   studied in this work.
We review their main computational properties which we are going to use in the proof of Theorems \ref{thm:Coars} and \ref{thm:refi}.
\end{quote}

\subsection{Filtered spaces}

A \emph{filtered space} is a  Hausdorff topological space endowed with a filtration by closed sub-spaces 
\bee
\emptyset = X_{-1} \subseteq X_0\subseteq X_1\subseteq\ldots \subseteq X_{n-1} \subsetneq X_n=X.
\eee
The
\emph{formal dimension} of $X$ is $\dim X=n$. Any non-empty connected component $S$ of a $X_i\menos X_{i-1}$ is a \emph{stratum}. We say that $i$ is the \emph{formal dimension} of $S$, written $i = \dim S$. We denote by $\SS$ the family of strata. In order to avoid confusion we also write $(X,\SS)$ the filtered space.
The $n$-dimensional strata are the \emph{regular strata}, other strata are \emph{singular strata}. The family of singular strata  is denoted by $\SSsing$.
Their union is the singular part  $\Sigma$.


A continuous map $f \colon (X,\SS) \to (Y,\T)$  between two filtered spaces is a \emph{stratified map} if for each  $S \in \SS$ there exists $\bi S f \in \T$ with $f(S) \subset \bi Sf$ and $\codim \bi Sf \leq \codim S$.
The map $f$ is a \emph{stratified homeomorphism} if $f$ is a homeomorphism and $f^{-1}$ is a stratified map.

\subsection{Examples} \label{Ejemplos} Unless expressly stated otherwise, a manifold $M$ is endowed with the filtration $\emptyset  \subseteq M$. The associated filtration is denoted by $\I =\{M_{cc} \}$,  where $cc$ denotes connected component.
Consider $(X,\SS)$  a filtered space. 

\indent + An open subset $U \subset X$ inherits the \emph{induced  filtration }$U_i = U \cap X_i$. The 	associated \emph{induced stratification}  is $\SS_U = \{ (S \cap U)_{cc} \mid S \in \mathcal S\}$.
 We write $(U,\SS)$ instead of $(U,\SS_{U})$.

\indent + Given an $m$-dimensional manifold $M$ the product $M \times X$ inherits the \emph{product filtration} $(M\times X)_i = M \times X_{i-m}$. The associated \emph{product stratification}  is $\I \times \SS = \{ M_{cc} \times S \mid S\in \SS\}$.

\indent + The cone $\rc X$ inherits the \emph{cone filtration} $\rc X_i = \rc X_{i-1}$, with the convention $\rc \emptyset =\{\tv\}$. The 
associated \emph{cone stratification} is $\rc \SS = \{ \{\tv\} \} \sqcup \{ S \times ]0,1[, S \in \SS\}$.

\rojo
\indent +  Let $m\in \N$. We consider the join $\S^{m} * X = \tc\S^{m}\times X/\sim$, where the equivalence relation is generated by 
 $
 ([z,1],x) \sim ([z,1],x').
 $
 An element of \, $\S^{m} * X$ is denoted by $[[z,t],x]$. We identify $\S^{m}$ with 
$\{ [[z,1],x]   \}$ and $X$ with $\{[[z,0],x]\}$. The join $\S^{m} * X$ is  endowed with the \emph{join filtration}
$
\S^{m} \subset \S^{m} *X_0 \cdots \subset \S^{m} * X_{n-1} \subsetneq \S^{m} * X_{n}.
$
The associated \emph{join stratification} is 
$
\SS_{\star m } =\{(\S^{m})_{cc}, \rc \S^{m} \times  S \mid S \in \SS\}$.
%

\indent + Any point  $x \in X$, with $\{x \} \not\in \SS$,  can be added as a new stratum. The new filtration is 
$X_0 \cup \{ x \} \subset X_1\cup \{ x \}   \subset \cdots \subset  X_n \cup \{ x \} =X$ and the associated stratification is
\begin{equation}\label{ptplus} \SS_x =  \{ x \} \sqcup \{ (S\menos \{x\})_{cc}\mid S \in \SS\}.
\end{equation}
\negro

The following result will be important for the understanding of the local structure of  the filtered spaces we are interested in.

\rojo
\bp\label{join1}
Let $(X,\SS)$ be a filtered space and let  $m\in \N^*$. Then, there exists a stratified homeomorphism  
\be
\label{refi}
h \colon (\rc (\S^{m} * X), \rc\SS_{\star m}) \to (\rc \S^m \times \rc X, (\I \times \rc \SS)_{(\tv,\tw)}),
\ee
where $\tv$ is the apex of $\rc \S^m$ and $\tw$ that of $\rc X$.
\ep
\bpr
We find in \cite[5.7.4]{MR2273730} the  homeomorphism
$
h \colon \rc (  \S^{m}*X) \to \rc \S^m \times \rc X:
$
defined by
\begin{equation}\label{refirefi}
h([[[z,t],y],r]) =
\left\{
\begin{array}{ll}
  ( [z,2rt],[y, r])& \hbox{si } t\leq 1/2\\[.3cm]
  ([z,r], [y,2r(1-t)])  & \hbox{si } t\geq 1/2 .
\end{array}
\right.
\end{equation}
Let us verify that $h$ preserves the stratifications.
We write $ \tu$ the apex of the cone $\rc (S^m *X)$.
We distinguish three cases.

\smallskip

+ $h(\tu) = (\tv,\tw). $

+ $ h (\S^{m} \times ]0,1[ ) = (\rc \S^m\times \{ \tw \}) \menos \{ (\tv,\tw)\} $ since $h([[[z,1],y],r]) = ([z,r], \tv)$.

+ The restriction 
$ h\colon \rc \S^m  \times X \times ]0,1[  \to \rc \S^m  \times X \times ]0,1[ $ is given by  
$$h([z,t],y,r)\mapsto 
\left\{
\begin{array}{ll}
  ( [z,2rt],y, r)& \hbox{si } t\leq 1/2\\[.3cm]
 ([z,r], y,2r(1-t))   & \hbox{si } t\geq 1/2 .
\end{array}
\right.
$$
It is  clearly a   stratified homeomorphism.
\epr\negro

\subsection{Perversities.} \label{subsec:perv} Consider $(X,\SS)$ a filtered space.
A \emph{$M$-perversity}, or simply \emph{perversity}, on $(X,\SS)$ is a map $\ov p \colon \SS \to  \ov \Z = \Z \sqcup \{-\infty,\infty\}$ verifying $\ov p(S)=0$ for any regular stratum \cite{RobertSF}. 

The \emph{top perversity} is the perversity defined by $\ov t (S) =\codim S - 2$ for each singular stratum $S$. The \emph{dual perversity} of  $\ov p$ is the perversity $D\ov p$ defined by $D\ov p = \ov t - \ov p$.

A perverse space is a triple $(X,\SS,\ov p)$ where $(X,\SS)$ is a filtered space and $\ov p$ is a perversity on $(X,\SS)$. 
Given a stratified map $f \colon (X,\SS) \to (Y,\TT)$, a perversity $\ov q$ on $(Y,\TT)$ and a perversity  $\ov p$ on $(X,\SS)$,  we define 

$\bullet$ the \emph{pull-back perversity} $f^\star\ov q$ on $(X,\SS)$ by: $f^\star\ov q (S) = \ov q (\bi Sf)$ for each $S \in \SS^{sing}$,
 
 $\bullet$ the \emph{push-forward perversity} $f_\star\ov p$ on $(X,\T)$ by: $f_\star\ov p (T) = \min\{ \ov p ( Q ) \mid \bi Qf = T\}$ for each $T \in \TT^{sing}$, with $\inf \emptyset =\infty$.

Notice that  $f^\star f_\star \ov p \leq  \ov p$ when $\ov p(S)\geq 0$ if $\bi SI \in \TTreg$.

\bigskip

We make a quick reminder of the intersection homologies/cohomologies deployed in this work. They have been mainly introduced in order to study the Poincar\'e duality of the intersection (co)homology in different contexts.

\subsection{Tame and intersection (co)homologies}
\label{tameinter} (cf. \cite{CST7,CST3}).  We fix  an $n$-dimensional  perverse space $(X,\SS,\ov p)$.
Tame intersection homology is a variant of the classic intersection homology (cf. \cite{MR572580,MR696691,MR800845}).
When the perversity $\ov p$ is greater than the top perversity it is possible to have a $\ov p$-intersection chains contained in the singular part $\Sigma$ of $X$. This fact prevents the Poincar\'e duality and the de Rham Theorem. For this reason the tame intersection homology was introduced (cf. \cite{CST3,CST7,LibroGreg,MR2210257}).

A \emph{filtered simplex} is a singular simplex $\sigma\colon\Delta\to X$ where the euclidean simplex $\Delta$ is endowed with a filtration  
 $\Delta=\Delta_{0}\ast\Delta_{1}\ast\cdots\ast\Delta_{n}$, called \emph{$\sigma$-decomposition},
verifying
$
\sigma^{-1}X_{i} = \Delta_{0}\ast\Delta_{1}\ast\cdots\ast\Delta_{i},
$
for each~$i \in \{0, \ldots, n\}$. 
 A factor $\Delta_{i}$ can be empty with the convention  $\emptyset * Y=Y$. 
 The filtered simplex $\sigma$ is  a \emph{regular simplex} when $\im \sigma \not \subset\Sigma$, that is, $\Delta_n \ne \emptyset$.
 
We decompose the boundary of a filtered simplex $\Delta=\Delta_{0}\ast\dots\ast\Delta_{n}$  as
$\partial\Delta=\partial_{\reg}\Delta+\partial_{\sing}\Delta,$
where $\partial_{\reg}\Delta$ contains all the regular simplices.

The \emph{perverse degree} of the filtered simplex $\sigma$ relatively to a stratum $S \in \dos {\SS} {\mathcal F}$
is 
$$
\|\sigma\|_{S}=\left\{
 \begin{array}{cl}
 -\infty,& \text{if } S \cap \im \sigma=\emptyset,\\
 \dim (\Delta_{0}\ast\cdots\ast\Delta_{\dim S}),& \text{otherwise.}
  \end{array}\right.
  $$
%

A filtered simplex $\sigma \colon \Delta \to X$ is \emph{$ \ov{p}$-allowable} if
$
\|\sigma\|_{S}\leq \dim \Delta - \codim S + \ov{p}(S)$, for each  $S\in {\SS}$.
Moreover, if $\im \sigma \not\subset \Sigma$ then the simplex $\sigma$ is said to be \emph{$\ov p$-tame}.

The chain complex $\lau C{\ov{p}}*{X;\SS}$ is the $G$-module formed of the linear combinations
$\xi=\sum_{j \in J}n_{j}\sigma_{j}$, where each  $\sigma_{j}$ is $ \ov{p}$-allowable, 
and such that
${\partial}\xi=\sum_{\ell \in L}n_{\ell}\tau_{\ell}$, where the simplices
$\tau_\ell$ are $\ov{p}$-allowable.
We call $(\lau C{\ov{p}}*{X;\SS},\partial)$ the \emph{$\ov{p}$-intersection complex} and its homology,
$\lau H{\ov{p}}*{X;\SS}$, the
\emph{$\ov{p}$-intersection homology.} %
This designation is justified since this homology  matches with the usual intersection homology (cf. \cite[Theorem A]{CST3}).

The chain complex $\lau \gC {\ov{p}}* {X;\SS}$ is the $G$-module formed of the linear combinations
$\xi=\sum_{j \in J}n_{j}\sigma_{j}$, with each $\sigma_{j}$ is $\ov{p}$-tame, 
and such that
${\partial_{\reg}}\xi=\sum_{\ell \in L}n_{\ell}\tau_{\ell}$, where the simplices
$\tau_{\ell}$ are $\ov{p}$-tame.
We call $(\lau \gC {\ov{p}}* {X;\SS},\gd={\partial_{\reg}})$ the \emph{tame $\ov{p}$-intersection complex} and its homology,
$\lau \gH {\ov{p}}* {X;\SS}$, the
\emph{tame $\ov{p}$-intersection homology.} %
This designation is justified since this homology  matches with the usual tame intersection homology (cf. \cite[Theorem B]{CST3}).
We have $\lau H {\ov p} * {X;\SS} = \lau \gH {\ov p} * {X;\SS}$ when $\ov p \leq \ov t$ (cf. \cite[Remark 3.9]{CST3}).

Associated cohomology is defined by using the functor $\Hom$, as usual in algebraic topology. We put the dual complexes 
$\lau C *{\ov{p}} {X;\SS}= \hom (\lau C {\ov{p}}*{X;\SS};\SS)$
and
$\lau \gC *{\ov{p}} {X;\SS}= \hom (\lau \gC{\ov{p}}*{X;\SS};\SS)$
endowed with the dual differential $d$. Their  cohomologies are the  \emph{$\ov p$-intersection cohomology} 
$\lau H *{\ov{p}} {X;\SS}$ and the   \emph{$\ov p$-tame intersection cohomology} 
$\lau \gH *{\ov{p}} {X;\SS}$.

 Let  $U \subset X$ be an open subset .
The relative homologies  $\lau H  {\overline{p}}  *{X,U;\SS}$ and $\lau \gH  {\overline{p}}  *{X,U;\SS}$ are defined from quotient complexes
$
\lau C  {\overline{p}}  *{X,U;\SS}
= 
\lau C  {\overline{p}}  *{X;\SS}/\lau C  {\overline{p}} * {U;\SS}
$
and
$
\lau \gC  {\overline{p}}  *{X,U;\SS}
= 
\lau \gC  {\overline{p}}  *{X;\SS} / \lau \gC  {\overline{p}} * {U;\SS}
$
(\cite[Definition 4.5]{CST3}).
The relative cohomologies $\lau H *  {\overline{p}}  {X,U;\SS}$ and $\lau \gH  *{\overline{p}}  {X,U;\SS}$ are defined by using the functor $\Hom$ (cf. \cite[Definition 7.1.1]{LibroGreg}).

The \emph{(tame) intersection cohomology with compact supports} are defined by
\begin{eqnarray}
\label{iccs}
\lau H * {\ov p,c} {X;\SS} = \varinjlim_{K \subset X} \lau H * {\ov p} {X,X\menos K;\SS}
\phantom{--} \hbox{ and }
\phantom{--}
\hfill \lau \gH * {\ov p,c} {X;\SS} = \varinjlim_{K \subset X} \lau \gH * {\ov p} {X,X\menos K;\SS},
\end{eqnarray}
where $K$ runs over the compact subsets of $X$ (cf. \cite[Definition 6.1]{MR3046315}).

\subsection{Main properties for (tame) intersection (co)homology} \label{mpti} We group here the main properties  of  the (tame) intersection homology.
We fix a perverse set $(X,\SS,\ov p)$. 

\bigskip

 {a. \bf Mayer-Vietoris.} Associated to  an  open cover $\{U,V\}$ of $X$ we have the long exact sequences
$$
\cdots \to  \lau H {\ov{p}}{k+1}  {X;\SS} \to \lau H {\ov{p}}k  {U\cap V;\SS}\to
\lau H {\ov{p}}k  {U;\SS}\oplus \lau H {\ov{p}}k  {V;\SS}\to
 \lau H {\ov{p}}k  {X;\SS}\to \cdots  ,
$$
$$
\cdots \to  \lau \gH {\ov{p}}{k+1}  {X;\SS} \to \lau \gH {\ov{p}}k  {U\cap V;\SS}\to
\lau \gH {\ov{p}}k  {U;\SS}\oplus \lau \gH {\ov{p}}k  {V;\SS}\to
 \lau \gH {\ov{p}}k  {X;\SS}\to \cdots  ,
$$
(cf. \cite[Proposition 4.1]{CST3}).

\medskip
%
%

 {b. \bf Local calculations.} We have the isomorphisms 
$
\lau H {\ov p} * {\R^m \times X, \I \times \SS} =\lau H {\ov p} * { X,  \SS} $
and
$
\lau  \gH  {\ov{p}} * {\R^m \times  X,\I \times \SS }=
\lau  \gH  {\ov{p}} * { X, \SS }
$
 where the  isomorphism  comes from the canonical projection $\pr \colon  \R^m \times X \to X$.
(cf. \cite[Corollary 3.14]{CST3}). 
If $L$ is compact, we have the isomorphisms
\begin{eqnarray*}
\lau H {\ov p} k {\R^m \times \rc L, \I \times \rc \SS} &=&
\left\{
\begin{array}{cl}
\lau H {\ov p} k {L,  \SS} &\text{ if } k\leq D\ov{p}( \tv),\\
0&\text{ if } 0\neq k> D\ov{p}( \tv),\\
G&\text{ if } 0=k>D\ov{p}( \tv),
\end{array}\right.
\\[,2cm]
\lau  \gH  {\ov{p}} k {\R^m \times  \rc L,\I \times \rc\SS }&=&
\left\{
\begin{array}{cl}
\lau \gH  {\ov{p}} k {L;\SS}&\text{ if } k\leq D\ov{p}( \tv)
,\\
0&\text{ if }  k >  D\ov{p}( \tv)
\end{array}\right.
\end{eqnarray*}
where  the  isomorphisms (first line) come from the inclusion $\iota \colon L \to \R^m \times \rc L$ defined by $\iota (x) = (0, [x,1/2])$ (cf. \cite[Proposition 5.1]{CST3}).

\medskip

\rojo 
c. {\bf  Join. Suspension.} 

\bl \label{CalJoin}
Let $(X,\SS)$ be a  CS-set. Consider $\ov p
$ a perversity on the join $\S^m*X$  for some  integer $m\in \N$ (cf.  \exemref{Ejemplos}). We suppose that $\ov p$ takes the same value $\ov p(\S^m)$ on each connected component $(\S^m)_{cc}$ of $\S^m$. We have
\begin{eqnarray*}
\lau H {\ov p} k {\S^m * X,  \SS_{\star m}} &=&
\left\{
\begin{array}{cl}
\lau H {\ov p} k { X, \SS} &\text{ if } k\leq D\ov{p}(\S^m),
\\
G &\text{ if } 0 = k> D\ov{p}(S^m),\\
0&\text{ if } D\ov{p}(\S^m) < k \leq D\ov{p}(\S^m)  +m +1 , k\ne 0
\\
\lau {\widetilde H} {\ov p} {k-m-1} { X, \SS} &\text{ if } k\geq D\ov{p}(\S^m)  +m+2, k\ne  0
\end{array}\right.
\\[.2cm]
\lau  \gH  {\ov{p}} k {  \S^m * X,\SS_{\star m}}&=&
\left\{
\begin{array}{cl}
\lau \gH  {\ov{p}} k {X;\SS}&\text{ if } k\leq D\ov{p}(\S^m),
,\\
0&\text{ if } D\ov{p}(\S^m) < k \leq D\ov{p}(\S^m)  +m +1,
\\
\lau \gH {\ov p} {k-m-1} { X, \SS} &\text{ if } k\geq D\ov{p}(\S^m)  +m+2,
\end{array}\right.
\end{eqnarray*}
where  the first line isomorphisms come from the natural inclusion $X \hookrightarrow  \S^m * X$ (cf. \exemref{Ejemplos}).
\el
\bpr
The three cases use the same technics: Mayer-Vietoris and local calculations. We prove just the first one.
The join stratification is 
$
\SS_{\star m } =\{(\S^{m})_{cc}, \rc \S^{m} \times  S \mid S \in \SS\}$.
The perversity $\ov p$ is determinate by the number $\ov p (\S^{m})$ and a perversity on $X$, still denoted by $\ov p$, related by: $\ov p(\rc \S^{m} \times S) = \ov p(S)$ for each $S \in \SS$. 

We consider the open covering $\S^m * X = U \cup V$ where $U =( \S^m * X ) \menos \S^m = \rc \S^m \times X$ and $V = (\S^m * X ) \menos X = \S^m \times \rc X$. This last equality comes from the homeomorphism $[[z,t],y] \mapsto (z,[y,1-t])$.
 We have $U \cap V = \S^m \times ]0,1[ \times X$.
The inclusion $U\cap V \hookrightarrow U$ induces the morphism 
$\pr_* \colon \lau H {\ov p} * {\S^m \times  X,  \I \times \SS} \to
\lau H {\ov p} k { X,  \SS}$, where $\pr \colon\S^m \to X$ is la projection canonique,
and
the inclusion $U\cap V \hookrightarrow V$ induces the morphism 
$I_* \colon \lau H {\ov p} * {\S^m \times  X,  \I \times \SS} \to
\lau H {\ov p} * { \S^m \times \rc X,  \I \times \rc \SS}$, where $I \colon \S^m \times X \hookrightarrow \S^m \times \rc X$ 
is the inclusion $(\theta,x) \mapsto (\theta,[x,1/2])$ (cf. \cite[Corollary 3.14]{CST3}).

Applying Mayer-Vietoris (cf. \cite[Proposition 4.1]{CST3}) we get the short exact sequence
$$
0 \to  \coker F_{k}  \to \lau H {\ov p} k {\S^m * X,  \SS_{\star m}} \to  \ker F_{k-1} \to 0,
$$
for each $k\in \Z$, where
$$
F_k  = \pr_k \oplus I_k \colon  \lau H {\ov p} k {\S^m \times  X,  \I \times \SS}  \to  \lau H {\ov p} k {X, \SS}
\oplus
\lau H {\ov p} k { \S^m \times \rc X,  \I \times \rc \SS}.
$$
Since intersection homology verifies Kunneth (cf. \cite[Proposition 4.7]{CST3}) then this maps becomes
$$
F_k =    \lau H {\ov p} k {X,   \SS}  \oplus \lau H {\ov p} { k-m} {X, \SS} \to  \lau H {\ov p} k {X, \SS}
\oplus
\lau H {\ov p} k { \rc X,   \rc \SS}\oplus
\lau H {\ov p} {k-m} { \rc X,  \rc \SS}
$$
where 
$F_k(a,b) =\left\{
\begin{array}{ll}
(a,J_k(a), J_{k-m}(b))& \hbox{if } m\ne 0 \\
(a-b,J_k(a), J_{k}(b))& \hbox{if } m=0
\end{array}
\right.
$.
 Here, $J_\ell \colon   \lau H {\ov p} \ell {X,   \SS} \to  \lau H {\ov p} \ell {\rc X,   \SS} $ is  induced by the inclusion $x \mapsto [x,1/2]$.
 Notice that
 $\ker F_k = \ker J_{k-m}
 $
 and $\coker F_k = \lau H {\ov p} {k} { \rc X,  \rc \SS} \oplus \coker J_{k-m}$.
 
 Let us consider the long exact sequence associated to the pair $(\rc X,X)$ 
$$
 \cdots      \lau H {\ov p} {k-1} {\rc X,  X, \rc \SS}  \to  \lau H {\ov p} k { X,    \SS}  \xrightarrow  {J_*}  \lau H {\ov p} k {\rc X,   \rc \SS}  \rightarrow
    \lau H {\ov p} k {\rc X,  X, \rc \SS}  \to\cdots
    $$
(cf. \cite[Definition 4.4]{CST3}). 
Notice that for each  $k\in \Z$ 
\begin{itemize}
\item[-] the map $F_{k-1}$ is a monomorphism  or the map $F_k$ is an epimorphism and
\item[-] the map $J_{k-1}$ is a monomorphism  or the map $J_k$ is an epimorphism 
\end{itemize}
We conclude that
\begin{eqnarray*}
\lau H {\ov p} k {\S^m * X,  \SS_{\star m}} &=&
\coker F_k \oplus \ker F_{k-1} =
\lau H {\ov p} k { \rc X, \rc \SS} \oplus 
\coker J_{k-m} \oplus \ker J_{k-m-1}  \\
&=& \lau H {\ov p} k { \rc X, \rc \SS} \oplus\lau H {\ov p} {k-m} { \rc X,X, \rc\SS} 
\\
&=_{(1)}& \left\{
\begin{array}{ll}
\lau H {\ov{p}} {k}{X,\SS}& \hbox{if } k\leq D\ov p (\S^m)\\
0 & \hbox{if } 0\ne k  > D\ov p (\S^m)  \\
G& \hbox{if } 0= k > D\ov p (\S^m) 
\end{array}
\right.
\oplus \left\{
\begin{array}{ll}
0&\text{ if }  k \leq  D \ov p(\S^m)+ m +1\\
\lau {\widetilde{H} }{\ov{p}} {k-m-1}{ X, \SS}
&\text{ if } k \geq  D \ov p(\S^m)+m +2
\end{array}\right.
\end{eqnarray*}
which gives the claim. Here, ${(1)}$ is given by \cite[Proposition 5.2, Corollary 5.3]{CST3}.

When $k\leq D \ov p(\S^m)$ the isomorphism $\lau H  {\ov p} k {X,\SS} = \lau H {\ov p} k {\S^m * X,  \SS_{\star m}} $ 
comes from the map $X \xrightarrow f U \xrightarrow g \S^m * X$ with $f(x) = ([z,0],x)$ and $g([z,t],x) = [[z,t],x]$. In other words, the natural inclusion $X \hookrightarrow \S^m*X$, $x \mapsto  [[z,0],x]$ (cf. \exemref{Ejemplos}). 
\epr 
Let us look at the case $m=0$, that is, the suspension $\Sigma X$ when $\S^0 = S^0$. Previous calculations suppose that the perversity $\ov p$ takes the same value at the two connected components of $\S^0$: $\tn$ and $\ts$. In the general case, if $\ov p(\ts) \geq \ov p(\tn)$, we have
\begin{eqnarray*}
\lau H {\ov p} k {\S^0 * X,  \SS_{\star 0}} &=&
\left\{
\begin{array}{cl}
\lau H {\ov p} k { X, \SS} &\text{ if } k\leq D\ov{p}(\ts),\\
G &\text{ if } 0 = k> D\ov{p}(\ts),
\\
0&\text{ if } D\ov{p}(\ts) < k \leq D\ov{p}(\tn)  +1 , k\ne 0
\\
\lau {\widetilde H} {\ov p} {k-1} { X, \SS} &\text{ if } k\geq D\ov{p}(\tn) + 2, k \ne 0
\end{array}\right.
\\[.2cm]
\lau  \gH  {\ov{p}} k {  \S^0 * X,\SS_{\star 0}}&=&
\left\{
\begin{array}{cl}
\lau \gH  {\ov{p}} k {X;\SS}&\text{ if } k\leq D\ov{p}(\ts),
,\\
0&\text{ if } D\ov{p}(\ts) < k \leq D\ov{p}(\tn)  +1,
\\
\lau \gH {\ov p} {k-1} { X, \SS} &\text{ if } k\geq D\ov{p}(\tn)  +2,
\end{array}\right.
\end{eqnarray*}
\negro
\medskip

d. {\bf Relative homologies}. Let $U$ be an open subset of $X$.
We have the associated long exact sequences for homology
$$
\cdots \to \lau H  {\overline{p}}  k{U;\SS}
\to \lau H  {\overline{p}}  k{X;\SS}
\to \lau H  {\overline{p}}  k{X,U;\SS}
\to \lau H  {\overline{p}}  {k-1}{U;\SS}
\to \cdots ,
$$
$$
\cdots \to \lau \gH  {\overline{p}}  k{U;\SS}
\to \lau \gH  {\overline{p}}  k{X;\SS}
\to \lau \gH  {\overline{p}}  k{X,U;\SS}
\to \lau \gH  {\overline{p}}  {k-1}{U;\SS}
\to \cdots 
$$
(cf. \cite[Definition 4.5]{CST3})
and for cohomology
$$
\cdots \to \lau H  k {\overline{p}}  {X,U;\SS}
\to \lau H   k{\overline{p}}  {X;\SS}
\to \lau H k  {\overline{p}}  {U;\SS}
\to \lau H  {k+1}{\overline{p}}  {X,U;\SS}
\to \cdots 
$$
$$
\cdots \to \lau \gH  k {\overline{p}}  {X,U;\SS}
\to \lau \gH   k{\overline{p}}  {X;\SS}
\to \lau \gH k  {\overline{p}}  {U;\SS}
\to \lau \gH  {k+1}{\overline{p}}  {X,U;\SS}
\to \cdots 
$$
(cf. \cite[Theorem 7.1.11]{LibroGreg})\footnote{Only the tame case is considered in this reference but the non-tame case can be treated in the same way.}.

\medskip

e. {\bf  Universal Coefficients Theorem.} 
There are two natural exact sequences
$$
0 \to \Ext(\lau H  {\ov p}  {k-1}{X;\SS}, R) \to  \lau H k {\ov p} {X;\SS} \to \Hom(\lau H {\ov p}  {k}{X;\SS}, R) \to 0,
$$
$$
0 \to \Ext(\lau \gH  {\ov p}  {k-1}{X;\SS}, R) \to  \lau \gH k {\ov p} {X;\SS} \to \Hom(\lau \gH {\ov p}  {k}{X;\SS}, R) \to 0,
$$
for every $k\in \N$. We find the proof of the second assertion in \cite[Proposition 7.1.4]{LibroGreg}. But the proof is the same for the first sequence.

\subsection{Intersection  homology from Borel-Moore point of view  (cf. \cite{MR2276609,ST1})} 
The  \emph{Borel-Moore $\ov p$-intersection homology} $\lau H {BM,\ov p} * {X;\SS}$
and the \emph{Borel-Moore $\ov p$-tame intersection homology} $\lau \gH {BM,\ov p} * {X;\SS}$
are defined in the same way as  the homologies defined in \ref{tameinter} have been defined but considering locally finite chains instead  of finite chains.

When $X$ is compact, we have    $\lau H {BM,\ov p} * {X;\SS} =  \lau H {\ov p} * {X;\SS}$ and $\lau \gH {BM,\ov p} * {X;\SS} =  \lau \gH {\ov p} * {X;\SS}$.

\subsection{Main properties for Borel-Moore (tame) intersection homology} \label{mpbmti} 
 We suppose that $X$ is a {\em hemicompact }space, that is, there exists an increasing sequence of compact subsets $K_0 \subset K_1 \subset \cdots K_n \subset \cdots$ such that,  each compact $K \subset X$ is included on some $K_n$. We have proved in \cite[Proposition 2.2]{CST8} that the Borel-Moore intersection homology can be computed in terms of the intersection homology in the following way\footnote{In the op.cit. the result is proved for the Borel-Moore tame intersection homology. The same proof works for the Borel-Moore intersection homology.}:
\be \label{limpro}
\displaystyle \lau H {BM,\ov p}* {X} =  \varprojlim_{n \in \N} \lau H {\ov p}* {X,X\menos K_n} \ \ \ \hbox{ and } \ \ \ 
\displaystyle \lau \gH {BM,\ov p}* {X} =  \varprojlim_{n \in \N} \lau \gH {\ov p}* {X,X\menos K_n}.
\ee

\subsection{Blown-up intersection cohomologies (cf. \cite{CST5})} 
Let $\hiru N{*}\Delta$ 
and  $\Hiru N*\Delta$ be the  simplicial chain and cochain complexes
of  an euclidean simplex $\Delta$, with coefficients in $R$. 
For each simplex $F \in \hiru N{*}\Delta$, we write $\1_{F}$ the element of $\Hiru N{*}\Delta$ taking the value 1 on $F$ and 0 otherwise. 
Given a face $F$  of $\Delta$, we denote by $(F,0)$ the same face viewed as face of the cone $\tc\Delta=\Delta\ast [\tw]$ and by $(F,1)$ 
the face $\tc F$ of $\tc \Delta$. 
Here, $[\tw]=(\emptyset,1)=\tc \emptyset$  is the apex o f the cone $\tc \Delta$.
Cochains on the cone are denoted $\1_{(F,\varepsilon)}$ for $\varepsilon =0$ or $1$.
For defining the blown-up intersection complex, we first set
$$\Hiru\tN{*}\Delta=N^*(\tc\Delta_0)\otimes\dots\otimes N^*(\tc\Delta_{n-1})\otimes N^*(\Delta_n).$$
A basis of $\Hiru \tN{*}\Delta$ is composed of the elements 
$\1_{(F,\varepsilon)}=\1_{(F_{0},\varepsilon_{0})}\otimes\dots\otimes \1_{(F_{n-1},\varepsilon_{n-1})}\otimes \1_{F_{n}}$,
 where 
$\varepsilon_{i}\in\{0,1\}$ and
$F_{i}$ is a face of $\Delta_{i}$ for $i\in\{0,\dots,n\}$ or the empty set with $\varepsilon_{i}=1$ if $i<n$.
We set
$|\1_{(F,\varepsilon)}|_{>s}=\sum_{i>s}(\dim F_{i}+\varepsilon_{i})$,
with the convention $\dim \emptyset=-1$.


Let $\ell\in \{1,\ldots,n\}$ and
$\1_{(F,\varepsilon)}
\in\Hiru\tN{*}\Delta$.
The  \emph{$\ell$-perverse degree} of 
$\1_{(F,\varepsilon)}\in \Hiru N{*}\Delta$ is
$$
\|\1_{(F,\varepsilon)}\|_{\ell}=\left\{
\begin{array}{ccl}
-\infty&\text{if}
&
\varepsilon_{n-\ell}=1,\\
|\1_{(F,\varepsilon)}|_{> n-\ell}
&\text{if}&
\varepsilon_{n-\ell}=0.
\end{array}\right.$$
Given $\omega = \sum_b\lambda_b \ \1_{(F_b,\varepsilon_b) }\in\Hiru \tN{*}\Delta$ with 
$0 \ne \lambda_{b}\in R$ for all $b$,
the \emph{$\ell$-perverse degree} is
$$\|\omega \|_{\ell}=\max_{b}\|\1_{(F_b,\varepsilon_b)}\|_{\ell}.$$
By convention, we set $\|0\|_{\ell}=-\infty$.

Let $\sigma\colon \Delta=\Delta_0\ast\dots\ast\Delta_n\to X$ be a filtered simplex.
We set $\tres\tN *{\sigma}=\Hiru \tN{*}\Delta$.
If $\delta_{\ell}\colon \Delta' 
\to\Delta$ 
 is  an inclusion of a face of codimension~1,
  we denote by
$\partial_{\ell}\sigma$ the filtered simplex defined by
$\partial_{\ell}\sigma=\sigma\circ\delta_{\ell}\colon 
\Delta'\to X$.
If $\Delta=\Delta_{0}\ast\dots\ast\Delta_{n}$ is filtered, the induced filtration on $\Delta'$ is denoted
$\Delta'=\Delta'_{0}\ast\dots\ast\Delta'_{n}$.
The \emph{blown-up intersection complex} of $X$ is the cochain complex 
$\Hiru \tN *X$ composed of the elements $\omega$
associating to each regular filtered simplex
 $\sigma\colon \Delta_{0}\ast\dots\ast\Delta_{n}\to X$
an element
 $\omega_{\sigma}\in \tres \tN*{\sigma}$  
such that $\delta_{\ell}^*(\omega_{\sigma})=\omega_{\partial_{\ell}\sigma}$,
for any face operator
 $\delta_{\ell}\colon\Delta'\to\Delta$
 with $\Delta'_{n}\neq\emptyset$. 
 The differential $d \omega$ is defined by
 $(d \omega)_{\sigma}=d(\omega_{\sigma})$.
 The \emph{perverse degree of $\omega$ along a singular stratum $S$} equals
 $$\|\omega\|_S=\sup\left\{
 \|\omega_{\sigma}\|_{\codim S}\mid \sigma\colon \Delta\to X \;
 \text{regular simplex such that }
\im \sigma \cap S\neq\emptyset
 \right\}.$$
 We denote $\|\omega\|$ the map $S \mapsto || \omega ||_S$, where $||\omega||_S=0$ if $S$ is a regular stratum.
 A \emph{cochain $\omega\in \Hiru \tN * {X;\SS}$ is $\ov{p}$-allowable} if $\| \omega\|\leq \ov{p}$ 
 and of \emph{$\ov{p}$-intersection} if $\omega$ and $d\omega$ are $\ov{p}$-allowable. 
 We denote $\lau \tN*{\ov{p}}{X;\SS}$ the complex of $\ov{p}$-intersection cochains and  
 $\lau \crH {\ov{p}}*{X;\SS}$ its homology called
 \emph{blown-up intersection cohomology}  of $X$ 
 for the perversity~$\ov{p}$. 
 
 A    subset $K\subset X$ is a \emph{support } of
the cochain $\omega \in \lau \tN * {\ov p} {X;\SS}$ if $\omega_\sigma = 0$, for any regular simplex $\sigma \colon \Delta \to X$ such that $ \im \sigma \cap K = \emptyset$. We also say that $\omega \equiv 0$ on $X\menos K$.

 We
denote $\lau \tN * {\ov p,c} {X;\SS}$ the complex of $\ov p$-intersection cochains with compact supports and 
$\lau \IH * {\ov p,c} {X;\SS}$ its cohomology.
\subsection{Main properties for blown up intersection cohomologies} \label{mpbi} We group here the main properties  of blown-up intersection cohomology.
We fix a perverse space $(X,\SS,\ov p)$.

\medskip

a. {\bf  Mayer-Vietoris.} Suppose $X$ paracompact. Given an open cover $\{U,V\}$ of $X$ we have the long exact sequence (cf. \cite[Corollary 10.1]{CST5})
{\small $$
\cdots \to 
\lau \IH  k{\ov{p}}  {X;\SS}\to 
\lau \IH k  {\ov{p}}  {U;\SS}\oplus \lau \IH  *  {\ov{p}}  {V;\SS}\to
\lau \IH k {\ov{p}}  {U\cap V;\SS}\to
 \lau \IH  {k+1}{\ov{p}}  {X;\SS}\to  \cdots.
$$

\medskip

b.  {\bf  Local calculations.} We have the isomorphism
$$
\lau \IH k {\ov p}  {\R^m \times  X, \I \times \rc \SS} = \lau \IH k {\ov p}  {  X, \I },
$$
coming from the inclusion  $\iota \colon   X \to \R^m \times X$ defined by $\iota(x) = (0,x)$
(cf.  \cite[Theorem D]{CST5}). If $L$ is compact, we have the isomorphism
$$
\lau \IH k {\ov p}  {\R^m \times \rc L, \I \times \SS} =
\left\{
\begin{array}{cl}
\lau \IH k {\ov p}  {L,  \SS} &\text{ if } k\leq \ov{p}( \tv),\\
0&\text{ if }  k> \ov{p}( \tv),\\
\end{array}\right.
$$
where  the isomorphism (first line) comes from the inclusion $\iota \colon L \to \R^m \times \rc L$ defined by $\iota (x) = (0, [x,1/2])$
(cf.  \cite[Theorem  E]{CST5}).

\medskip

\rojo
c. {\bf Join.} Proceeding as in \lemref{CalJoin},  one gets the isomorphism:$$
\lau \IH k {\ov p}  {\S^m * X,  \SS_{\star m+1}} =
\left\{
\begin{array}{cl}
\lau \IH  k {\ov p}  { X, \SS} &\text{ if } k\leq \ov{p}(\S^m),\\
0&\text{ if } \ov{p}(\S^m) < k \leq \ov{p}(\S^m)  +m+1 ,\\
\lau \IH  {k-m-1} {\ov p} { X, \SS} &\text{ if } k\geq \ov{p}(\S^m)  +m+2,
\end{array}\right.
$$
where  the first line isomorphism comes from the natural inclusion $X \hookrightarrow  \S^m * X$ (cf. \exemref{Ejemplos}).
\negro

\medskip

d. {\bf Relative cohomology.} We consider an open subset $U \subset X$.
The \emph{complex of relative $\ov{p}$-intersection cochains} is 
$\lau \tN * {\ov p}{X,U;\SS} =  \lau \tN * {\ov p}{X;\SS}  \oplus \lau \tN {*-1} {\ov p}{U;\SS} $, endowed with the differential 
 $
D(\alpha,\beta) = (d\alpha, \alpha - d\beta).
 $
Its homology is the
\emph{relative blown-up $\ov{p}$-intersection cohomology of the perverse pair $(X,U,\ov{p})$,}
denoted by 
$\lau \IH{*}{\ov{p}}{X,U}$.

By definition, we have a long exact sequence associated to the perverse pair $(X,U,\ov{p})$:
$$
\ldots\to
\lau \IH {i}{\ov{p}}{X;\SS}\stackrel{\i^\star  }{\to}
\lau \IH {i}{\ov{p}}{U;\SS}\to
\lau \IH{i+1}{\ov{p}}{X,U;\SS}\stackrel{\pr^*}{\to}
\lau \IH{i+1}{\ov{p}}{X;\SS}\to
\ldots,
$$
where $\pr \colon \lau \tN * {\ov p}{X,U}  \to \lau  \tN * {\ov p}{X} $ is defined by $\pr(\alpha,\beta) = \alpha$ and $\i \colon \lau \tN * {\ov p}{X} \rightarrow  \lau \tN * {\ov p}{U} $ is the restriction map
(cf. \cite[Sec. 12.2]{CST5}).

\medskip

e. {\bf  Injective limit.} 
Analogously to the Borel-Moore intersection homology, the    blown-up intersection cohomology with compact supports can be computed through the relative blown-up intersection cohomology by using an injective limit.
Let us see that.

\bp\label{limiteiny}
Let $
(X,\SS, \ov p)$ be a normal and hemicompact perverse space.
Then, there exists an isomorphism $$ \lau \IH * {\ov p,c} {X;\SS} \cong  \varinjlim_{K \subset X} \lau \IH *{\ov p} {X,X\menos K;\SS},$$
where $K$ runs over the family of compact subsets of $X$.
\ep
\bpr
By hemicompactness there exists an increasing sequence of compact subsets $\{K_n\}$ with
 $$
K_0 \subset \int{K_1} \subset K_1 \subset \int{K_2} \subset K_2 \subset \cdots K_n \subset \cdots , 
 $$
and $X = \bigcup_{n\geq 0} K_n$. In particular, the family $\{K_n\}$ is cofinal in the family of compact subsets of $X$. 
 So, it suffices to prove that the chain map
$$
B \colon \lau \tN *{\ov p,c} {X;\SS}\to \varinjlim_{n \in \N} \lau \tN *{\ov p} {X,X\menos K_n;\SS},
$$
defined by $B(\omega) =  \left<(\omega,0), {m} \right>$,
where $K_m$ is a compact support of $\omega$,
is a quasi-isomorphism.

An element $\left<(\alpha,\beta),m\right> \in \varinjlim_{n \in \N} \lau \tN *{\ov p} {X,X\menos K_n;\SS}$ is characterized by these two properties: 
\begin{itemize}
\item[-] $(\alpha,\beta) \in \lau  \IH *{\ov p} {X,X\menos K_m;\SS}$, and 
\item[-]  $\left<(\alpha,\beta),m\right> = \left<(\alpha',\beta'),m'\right>$  if  $ (\alpha,\beta) = (\alpha',\beta')$ on $\lau \tN *{\ov p} {X,X\menos K_{m'};\SS}$ if $m\leq m'$.
\end{itemize}
We proceed in several steps.

\medskip

$\bullet$ \emph{Step 1. Bump functions.}

\smallskip

Since $X$ is normal then,  for each each $n \in \N$, we can find a continuous map $f_n \colon X \to [0,1]$ with $f_n \equiv 0$ on $K_{n+1} $ and $f_n \equiv 1 $ on $X \menos \int{K_{n+2}}$. Associated to $f_n$ we have constructed a cochain $\tilde f_n \in \lau \tN 0 {\ov 0} {X;\mathcal S} $ verifying
$\tilde f_n \equiv 0 $ on $K_{n+1}$   and $\tilde f_n \equiv 1 $ on   $X \menos \int{K_{n+2}}$ (cf. \cite[Lemma 10.2]{CST5}). 
Consider the open covering $\mathcal U_n =\{ X\menos K_n,\int{K_{n+1}}\}$ of $X$.  Notice that\footnote{We refer the reader to (cf. \cite[Section 4]{CST5}) for the definition of the $\smile$-product.}, 
\be\label{fbeta}
 \gamma \in \lau \tN * {\ov p} {X\menos K_n;\mathcal S}
 \Longrightarrow 
  \tilde f_n  \smile \gamma \in \lau \tN {*,\mathcal U_n} {\ov p} {X;\mathcal S}
 \hbox { and }
 \tilde f_n  \smile \gamma  = \gamma
 \hbox{ on } X\menos{K_{n+3}}.
 \ee
 
 \medskip

$\bullet$ \emph{Step 2. The operator $B^*$ is a monomorphism.}

Let $[\omega] \in \ker B^*$. 
So, there exists $m\in \N$ and $\left<(\gamma,\eta),m\right> \in \lau \tN *{\ov p} {X,X\menos K_m;\SS}$ with $K_m$ compact support of $\omega \in \tN^*_{\ov p,c}(X;\SS)$ and $\left<(\omega,0),m\right> = \left<D(\gamma,\eta),m\right> = \left<(d\gamma, \gamma-d \eta),m\right>$. In particular, we get $\omega=d \gamma$ on $X$ and $\gamma-d\eta =0$ on $K\menos K_m$.

We get the claim if we prove that $\rho_{\mathcal U _m,c}(\omega) = d \theta $ for some $\theta \in \lau \tN{*,\mathcal U _m}{\ov p,c}{X;\SS}$ (cf. \cite[Theorem B]{CST5}). 
Here, 
$
\rho_{\mathcal U _m,c} \colon  \lau \tN{*}{\ov p,c}{X;\SS} \to\lau \tN{*,\mathcal U _m}{\ov p,c}{X;\SS}
$
is the canonical restriction.
 It suffices to consider  $\theta =\rho_{\mathcal U _m}( \gamma) - d(\tilde {f_m} \smile \eta)$,
 where $
\rho_{\mathcal U _m} \colon  \lau \tN{*}{\ov p,c}{X;\SS} \to\lau \tN{*,\mathcal U _m}{\ov p}{X;\SS}
$
is the canonical restriction, since
\begin{enumerate}[(i)]
\item $\tilde {f_m} \smile \eta\in \lau \tN{*,\mathcal U _m}{\ov p}{X;\SS}$ (cf. \eqref{fbeta}).
\item$\gamma - d (\tilde {f_m} \smile  \eta) \stackrel{\eqref{fbeta}} = \gamma - d\eta = 0$ on $X \menos K_{m+3}$, giving that $K_{m+3}$ is a compact support of $\theta$,
\item $d \theta = d \rho_{\mathcal U _m}( \gamma)  = \rho_{\mathcal U _m}( \omega) = \rho_{\mathcal U _m,c}( \omega)  $.
\end{enumerate}

\medskip

\medskip

$\bullet$ \emph{Step 3. The operator $B^{\mathcal U_m}$.}

For each $n,m \in \N$ we define
 $
  \lau {\tN}{*,\mathcal U  _m}{\ov p}{X,X\menos K_n;\SS} =
   \lau {\tN} {*,\mathcal U  _m}  {\ov p}{X;\SS}\oplus   \lau {\tN} {*-1,\mathcal U  _{m}} {\ov p}{X\menos K_n;\SS}$ (cf. \cite[Definition 9.6]{CST5}). We consider the chain map
$$
B^{\mathcal U  _m}  \colon \lau \tN {*,\mathcal U  _m} {\ov p,c} {X;\SS}\to \varinjlim_{n \in \N} \lau \tN {*,\mathcal U  _m} {\ov p} {X,X\menos K_n;\SS},
$$
defined by $B^{\mathcal U  _m} (\omega) = \left<(\omega,0),p\right>_{\mathcal U  _m}$,
where $K_p$ is a compact support of $\omega$ (cf. \cite[Definition 2.6]{CST7}).
We have the commutative diagram
\begin{equation}\label{diag:B}
\xymatrix@R=.5cm{
\ar[r]^-{B}  \lau \tN {*} {\ov p,c} {X;\SS}  \ar[d]^{\rho_{\mathcal U  _m,c} }& \varinjlim_{n\in\N} \lau \tN {*} {\ov p} {X,X\menos K_n;\SS}
\ar[d]^{\rho'_{\mathcal U  _m}}\\
\ar[r]^-{B^{\mathcal U  _m}} \lau  \tN {*,\mathcal U  _m} {\ov p,c} {X;\SS} & \varinjlim_{n\in\N}  \lau \tN {*,\mathcal U  _m} {\ov p} {X,X\menos K_n;\SS}
}
\end{equation}
 where the vertical maps are defined by restriction. Both are quasi-isomorphisms. It suffices to apply  \cite[Proposition 2.6]{CST7} (for the left one) and  the fact that inductive limits commute with cohomology and \cite[Theorem B]{CST5} (for the right one).

\medskip

$\bullet$ \emph{Step 4. The operator $B^*$ is an epimorphism.}

 Let $\Xi= \left<(\gamma,\eta),m\right>  \in \varinjlim_{\N} \lau \tN*{\ov p} {X,X\menos K;\SS}$ be a cycle. Then  $\left<D(\gamma,\eta),m\right>=\left<(d\gamma, \gamma- d\eta),m\right> =0$. 
The cochain  $\theta =\rho_{\mathcal U  _m}( \gamma) - d(\tilde {f}_m \smile \eta)$ is a cycle of $\lau \tN {*,\mathcal U  _m} {\ov p,c}{X;\SS}$ since (i), (ii) and 
$d \theta = d \rho_{\mathcal U  _m}( \gamma)  = 0$.
In fact,
\begin{eqnarray*}
B^{\mathcal U  _m,*}[\theta]&=& [\left<(\theta,0),{m+3}\right>_{\mathcal U  _m}] =
 \left[\left<( \rho_{\mathcal U  _m}(( \gamma) - d(\tilde {f}_m \smile \eta),0), {m+3}\right>_{\mathcal U  _m}\right] \\
&=_{(1)}& \left[\left<( \rho_{\mathcal U  _m}( \gamma), \tilde {f}_m \smile \eta),{m+3}\right>_{\mathcal U  _m}\right] 
=_{(2)} 
 [\left<( \rho_{\mathcal U  _m}( \gamma),  \eta),{m+3} \right>_{\mathcal U  _m}] 
 \\
 &=&
[\rho'_{\mathcal U  _m}(\left<(  \gamma,  \eta),{m+3}\right>)]
=_{(3)}  [\rho'_{\mathcal U  _m}( \left<( \gamma,  \eta),m\right>)] =  \rho'^*_{\mathcal U  _m}[\Xi].
\end{eqnarray*}
where $=_{(1)}$ comes from $D(\tilde {f}_m \smile \eta,0) = (d(\tilde {f}_m \smile \eta),-\tilde {f}_m \smile \eta)$, 
$=_{(2)}$  from $\tilde {f_m} = 1 $ on $X \menos {K}_{m+2}$ and $=_{(3)}$ from the fact that $\eta \in \tN^*_{\ov p}(X\menos K_m)$.

The properties of the previous diagram \eqref{diag:B} give the existence of $[\omega] \in\lau  \crH*{\ov p,c} {X;\SS}$ with
$
\rho^*_{\mathcal U  _m,c}[\omega] = [\theta]$  verifying
$
\rho'^*_{\mathcal U  _m}(B^{*}[\omega])  =B^{\mathcal U  _m,*}(\rho^*_{\mathcal U  _m,c}[\omega]) = B^{\mathcal U  _m,*}[\theta] =  \rho'^*_{\mathcal U  _m}[\Xi],
$
which gives $[\Xi] = B^{*}[\omega]$.
\end{proof}

\medskip

\section{Stratified sets and refinements}

\begin{quote}
A refinement of a stratified space $(X,\SS)$ is another stratified space $(X,\T)$ whose strata are formed using the strata of the original stratification.
We prove that it is possible to go from $\SS$ to $\T$ by modifying just a discrete family of strata: the simple refinement.
\end{quote}

\subsection{Stratified spaces}
A \emph{stratified space}\footnote{This definition is not a standard one in all sources. For example, it is more restrictive than that of \cite{CST1,LibroGreg}.} 
 is a  Hausdorff topological space $X$ endowed with a partition $\SS$, whose elements are called \emph{strata},  verifying the following conditions (S1)-(S6).

\begin{enumerate}[(S1)]

\item The family  $\SS$ is locally finite.

\item  An element of $\SS$ is a connected manifold.

\item \emph{Frontier Condition}. Given two strata $S,S' \in \SS$, we have\footnote{This condition is equivalent to say that the closure of a stratum is the union of strata.}: \hfill
$  S\cap \overline{S'}\neq \emptyset \Longrightarrow S\subset \overline{S'}.
$

\item  Given two strata $S,S' \in \SS$, we have: \hfill $S \cap  \ov{S' } \ne \emptyset  \hbox{ and } S \ne S' \Longrightarrow \dim S < \dim S'$.

\item The family $\{ \dim S \in  \SS\}$ is bounded.
\end{enumerate}

\smallskip

%
%

Stratified and filtered spaces are related as follows.

\begin{lemma}
Let $(X,\SS)$ be a stratified space. Then 
the filtration $\emptyset = X_{-1} \subseteq X_0\subseteq X_1\subseteq \ldots \subseteq X_{n-1} \subsetneq X_n=X$, given by
$$
X_k = \sqcup \{ S \in \SS \mid \dim S \leq k\},
$$
  (cd. (S5)) defines a filtered space on $X$ whose associated stratification is $\SS$.
\end{lemma}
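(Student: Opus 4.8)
The plan is to unpack the two definitions and check they are inverse to one another at the level of the underlying set-theoretic data. Starting from a stratified space $(X,\SS)$ satisfying (S1)--(S5), I first define the subsets $X_k = \sqcup\{S \in \SS \mid \dim S \leq k\}$ for $k \geq -1$, with $X_{-1} = \emptyset$ and $X_n = X$ where $n = \max\{\dim S \mid S \in \SS\}$ (finite by (S5)). The only genuinely topological point to verify is that each $X_k$ is closed; everything else is bookkeeping. I would derive closedness from the Frontier Condition (S3): if $x \in \ov{X_k}$, then $x$ lies in some stratum $S'$, and since $\SS$ is a partition, $x \in S' \cap \ov{X_k}$. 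Writing $X_k$ as the (locally finite, by (S1)) union $\bigcup_{\dim S \leq k} S$, local finiteness lets me pass the closure inside the union, so $x \in \ov S$ for some $S$ with $\dim S \leq k$; then $S' \cap \ov S \neq \emptyset$, whence by (S3) $S' \subset \ov S$, and by (S4) either $S' = S$ or $\dim S' < \dim S \leq k$. Either way $\dim S' \leq k$, so $x \in S' \subset X_k$. Thus $X_k$ is closed.

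Next I record that the filtration is increasing and that $X_{n-1} \subsetneq X_n$: the inclusions $X_{k-1} \subseteq X_k$ are immediate from the definition, and $X_{n-1} \neq X_n$ because a stratum of dimension exactly $n$ exists (by the choice of $n$) and is, by (S2), a nonempty connected manifold not contained in any union of lower-dimensional strata (here I would note that an open nonempty subset of an $n$-manifold stratum cannot be covered by countably many lower-dimensional submanifolds — or more simply, the $n$-strata are precisely the regular strata and are disjoint from $X_{n-1}$ by construction). This gives a filtered space in the sense of Subsection~2.1.

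Finally I verify that the stratification associated to this filtered space is exactly $\SS$. By definition, the strata of the filtered space are the connected components of the successive differences $X_k \menos X_{k-1}$. Now $X_k \menos X_{k-1} = \sqcup\{S \in \SS \mid \dim S = k\}$, a disjoint union of the $k$-dimensional strata of $\SS$, each of which is connected by (S2); hence each such $S$ is open and closed in $X_k \menos X_{k-1}$ and is therefore a union of connected components of it. Since the $S$ are connected, each is a single connected component, and they exhaust $X_k \menos X_{k-1}$; so the connected components of $X_k \menos X_{k-1}$ are precisely the $k$-dimensional strata of $\SS$. Ranging over all $k$ gives that the associated family of strata equals $\SS$, and moreover that the formal dimension of $S$ as a stratum of the filtered space agrees with $\dim S$.

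The main obstacle is the closedness of the $X_k$; once local finiteness (S1) is invoked to commute closure with the union, the Frontier Condition (S3) together with (S4) finishes it cleanly, and the remainder is a direct comparison of definitions.
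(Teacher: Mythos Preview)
Your approach matches the paper's: use (S1) to pass closure through the union, then (S3)/(S4) to show each $X_k$ is closed, and finally identify the connected components of $X_k \menos X_{k-1}$ with the $k$-dimensional strata of $\SS$.

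There is one small gap in your last paragraph. From the fact that $X_k \menos X_{k-1}$ is a \emph{disjoint} union of connected sets you cannot conclude ``hence each such $S$ is open and closed in $X_k\menos X_{k-1}$'': a partition into connected pieces need not be a partition into clopen pieces (e.g.\ $[0,1] = \{0\} \sqcup (0,1]$). You need to argue, exactly as in your first step, that (S3) and (S4) force $\ov S \menos S$ to consist only of strata of strictly smaller dimension, so each $k$-stratum $S$ is closed in $X_k \menos X_{k-1}$; then local finiteness (S1) makes the union of the remaining $k$-strata closed as well, whence $S$ is also open. The paper records precisely this step, citing (S1) and (S4) for the closedness of each $k$-stratum inside $X_k \menos X_{k-1}$.
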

\begin{proof}
For the first statement it suffices to prove that each $X_k$ is a closed subset of $X$.  Let us consider $x \in \ov {X_k}$. Property (S1) gives $x \in \ov {X_k} = \ov{ \{ S \in \SS \mid \dim S \leq k\}} =
  \{ \ov S \in \SS \mid \dim S \leq k\}$. So, there exists $S \in \SS$ with $x \in \ov S$ and $\dim S \leq k$. If $S'$ is  the stratum of $\SS$ containing $x$ then condition (S4) give $\dim S' \leq \dim S$ and therefore $x \in X_k$.
  
  We have $X_k\menos X_{k-1} = \sqcup \{ S \in \SS \mid \dim S = k\}$. Again, conditions (S1) and (S4) imply that the elements of the RHS of the equality are closed subsets of $X_k\menos X_{k-1} $. So, the stratification of the filtered space is $\SS$.
\end{proof}

These are not equivalent notions since, for example, the strata of a filtered space are not necessarily manifolds.

The relation $S \preceq S'$ defined by $S\subset \overline{S'}$, is an order relation on $\SS$ (see \cite[Proposition A.22]{CST1}). The notation $S \prec S'$ means $S \preceq S'$ and $S \ne S'$. So, condition (S4) is equivalent to
\begin{itemize}

\item[(S4)] $S \prec S' \Longrightarrow \dim S < \dim S'$.

\end{itemize}

The \emph{depth}  
 of a  family of strata $\SS' \subset \SS$ is 
$\depth \SS' = \sup \{ i \in \N \mid \exists S_0 \prec S_1 \prec \cdots \prec S_i  \hbox{ with } S_0, \ldots , S_i \in \SS'\}.$ Conditions (S4) and (S5) give
\begin{lemma}\label{lem:fin}
 Let $(X,\SS)$ be  a stratified space. Then $ \depth \SS < \infty$.\end{lemma}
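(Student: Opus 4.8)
\textbf{Proof plan for \lemref{lem:fin}.}

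The plan is to show that $\depth \SS'$ is bounded above by the number of distinct values taken by the dimension function $\dim$ on $\SS'$, which is finite by (S5). First I would recall the key fact: by (S4) (in its reformulated version), the implication $S \prec S' \Longrightarrow \dim S < \dim S'$ holds for all strata of $\SS$, hence in particular for all strata of $\SS' \subseteq \SS$. Thus any strictly increasing chain $S_0 \prec S_1 \prec \cdots \prec S_i$ in $\SS'$ gives rise to a strictly increasing chain of integers $\dim S_0 < \dim S_1 < \cdots < \dim S_i$.

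Next I would exploit (S5): the set $D = \{ \dim S \mid S \in \SS'\}$ is a subset of the bounded set $\{\dim S \mid S \in \SS\}$, hence $D$ is a finite set of nonnegative integers, say of cardinality $N = |D|$. Since the dimensions $\dim S_0 < \cdots < \dim S_i$ are $i+1$ pairwise distinct elements of $D$, we must have $i + 1 \leq N$, i.e. $i \leq N - 1$. Taking the supremum over all such chains yields $\depth \SS' \leq N - 1 < \infty$.

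The argument is essentially immediate once (S4) and (S5) are combined, so there is no real obstacle here; the only point to be careful about is the edge case where $\SS'$ is empty (the supremum over the empty family is $-\infty$, or by the stated convention one may regard it as $0$ or simply note the statement is vacuous), and the trivial observation that a chain of length $i$ involves $i+1$ strata. One should also note that no appeal to (S1), (S2) or (S3) is needed for this particular lemma — only the order-theoretic content of (S4) together with the boundedness (S5).
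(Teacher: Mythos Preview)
Your proposal is correct and matches the paper's approach exactly: the paper simply states ``Conditions (S4) and (S5) give'' \lemref{lem:fin} without writing out any further details, and your argument spells out precisely this implication. Your care about edge cases and the observation that only (S4) and (S5) are needed are accurate and appropriate.
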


In this work we shall use the formula 

\be\label{adh}
\ov S = S \sqcup \bigsqcup_{Q\prec S } Q.
\ee
Let us see that. The inclusion $\supset$ is clear. Let $x\in \ov S$. Consider $Q\in \SS $ containing $x$. Since $Q \cap \ov\SS \ne \emptyset$ then we get $\subset$ from (S3).

 The examples of  \rojo \exemref{Ejemplos}\negro: induced, product \rojo with a manifold \negro, cone and join stratification \rojo with a manifold\negro, are also stratified spaces, if we begin with a stratified space $(X,\SS)$.
Any point  $x \in X$, with $\{x \} \not\in \SS$,  can be added as a new stratum. This gives the stratification 
\begin{equation}\label{ptplus}\rojo\SS_x =  \{ x \} \sqcup \{ (S\menos \{x\})_{cc}\mid S \in \SS\}.
\end{equation}

\negro

\subsection{Refinements}
 We say that the stratified space $(X, \SS)$ is a \emph{refinement} of  the stratified space $(X, \mathcal T)$\footnote{We also say that $(X,\mathcal T)$ is a \emph{coarsening} of  $(X,\SS)$.}, written $(X,\SS) \triangleleft (X,\mathcal T)$,  if $\SS \ne \mathcal T$ and
\begin{itemize}
\item[(S6)] \rojo  any stratum $ T \in \mathcal T$ is a union of strata of $\SS$.\negro
\end{itemize}
 \rojo When  $S\in \SS, T \in \mathcal T$ and $S\subset T$ we also write $T =\bi SI$ \negro. We have
\begin{equation}\label{eq:desing}
\dim  S\leq \dim\bi  SI  \hbox{ and  }  \codim \bi SI \leq \codim S, \ \ \ \hbox{for each } S \in \SS.
\end{equation}
Notice that
\begin{equation}\label{SQSIQI}
S, Q \in \SS \hbox{ with } S \preceq Q \Longrightarrow \bi SI \preceq \bi QI
\end{equation}
(cf. (S3)).

\begin{remarque}\label{RemS6}
{\rm 
This condition (S6) is weaker that the  (S6) condition of \cite{SRef1}. In both cases, any stratum $T\in \TT$ is union of strata of $\SS$ but the condition (S6) of \cite{SRef1} asks for a particular local structure. More precisely, given a point $x\in X$ belonging to the strata $S \in \SS$ and $T \in \TT$  we locally have 
$
\left\{
\begin{array}
{lll}
w&=&(0,\tv)\\
S &=& \R^a \times \{ \tv\}\\
T&=& \R^a \times \rc S^{b-1},
\end{array}
\right.
$
where $S^{b-1}$ is the $(b-1)$-dimensional sphere and $\tv$ the apex of the cone.
See \remref{New} for a comparison between the two (S6) conditions.

In fact, this extra condition appears on the local calculations of \secref{sec:CS} and not on the other Sections.
}
\end{remarque}

In this work, we shall distinguish  several types of strata.

\bd\label{def:AOVM}
Let $(X,\SS) \triangleleft (X,\mathcal T)$ be a refinement. A stratum $S\in \SS$ is a \emph{source stratum }  if $\dim S = \dim \bi SI$. In this case, we also say that $S$ is a \emph{source stratum of $T \in \TT$}, if $T=\bi SI $. We also use the following types of strata:

\begin{itemize}
\item[-]  $\mathcal A =\{ S \in \SS \ / \ \dim S = \dim \bi SI\}$:  \hfill \emph{source strata}.

\item[-]  $\mathcal V =\{ S \in \SS \ / \ \dim S < \dim \bi SI\}$: \hfill \emph{virtual strata}.

\item[-]  $\mathcal M =\{ \hbox{maximal strata of } \mathcal V \hbox{ with $\dim \bi MI$ maximal}\}$: \hfill \emph{v-maximal strata}.

\item[-]  $\mathcal O =\{ S \in \mathcal A \ / \  \exists M \in \mathcal M \hbox{  with } M \preceq S , \bi MI = \bi SI  \}$:\hfill  \emph{stable strata}.

\end{itemize}

The refinement $(X,\SS) \triangleleft (X,\mathcal T)$ is \emph{simple} when  $\depth \mathcal V=0$.
 If $V \in \mathcal V$ and $Q \in \SS$ with $V \prec Q$ then $Q \in \mathcal A$.
We always have $\depth \mathcal M=0$.
\ed

\begin{remarque}\label{S=T}{\rm
By dimensional reasons, for any stratum $T \in \TT$, the union of source strata of $\SS$ included in $T$ is an open dense subset of $T$ and therefore a $\ell$-dimensional manifold for $\ell =\dim T$ (cf. (S1), (S2) and (S6)).
By connectedness, the stratum  $T$ can not be the union of a family of source strata except when $T \in \SS$.
So:
$
\SS \ne \TT \Longrightarrow \mathcal V \ne \emptyset \Longrightarrow \mathcal M \ne \emptyset .
$
}
\end{remarque}

\bd
Let $(X,\SS) \triangleleft  (X,\T)$ be a refinement. A stratum $S \in \SSsing$ is \emph{exceptional} if  $\bi SI \in \TTreg$. Moreover, if $\codim S=1$ we say that $S$ is an \emph{1-exceptional stratum}.
\ed

Any exceptional stratum is a virtual stratum.

\begin{definition}
A \emph{simple decomposition} of a refinement 
 $(X,\SS) \triangleleft (X,\mathcal T)$ is a finite sequence of 
 simple refinements: 
$
(X,\SS)  = (X,\mathcal R_0)\triangleleft  \cdots \triangleleft  (X,\mathcal R_m) =(X,\mathcal T).
$
\end{definition}

\begin{example}\label{ejemplo}
{\rm
A key result of this work is the \propref{111} giving the existence of  simple refinements. The relevance of these kind of refinements is given by \propref{Max}, where we get a nice local description of the a simple refinement  in the framework of CS-sets.

Before proving these results, we give an example of a refinement $(X,\SS) \triangleleft_I (X,\mathcal T)$ described as composition of two simple refinements $(X,\SS) \triangleleft_J (X,\mathcal S')$  and $(X,\SS') \triangleleft_K (X,\mathcal T)$  through a stratified space $(X,\SS')$.

\bigskip

\begin{center}
\begin{tikzpicture}[scale=0.9]


\draw[dashed](0,0)-- (3,0);
\draw[dashed] (0,0)-- (0,3);
\draw(.5,1.5)--(2.5,1.5);
\draw(1.5,0.5)--(1.5,1.5);

\draw(1.5,1.5) node {$\bullet$} ;
\draw(0.7,2.2) node {$\bullet$} ;
\draw (0.7,2.5) node {$Q_2$} ;
\draw (2.7,2.5) node {$Q_3$} ;
\draw(2.7,2.2) node {$\bullet$} ;
\draw (1.7,1.9) node {$Q_1$} ;
\draw (1.7,2.7) node {$R_1$} ;
\draw (1.5,0.3) node {$S_3$} ;
\draw (0.3,1.5) node {$S_1$} ;
\draw (2.7,1.5) node {$S_2$} ;
\draw (1.5,-.5) node {$ (X,\SS)$} ;
\draw (0.3,0.3) node {$R_3$} ;
\draw (2.7,0.3) node {$R_2$} ;

\draw[->] (1.5,-1) -- (4.5,-2);
\draw (3,-1.2) node {$J$} ;

\draw[->] (3.5,1.7) -- (9.5,1.7);
\draw (6.5,2) node {$I$} ;

\draw (1.5,-4.55) node {Refinement $J$} ;
\draw (1.5,-5) node {$\mathcal A \menos \mathcal O= \{Q_1,Q_2,Q_3,R_1,S_1,S_2\}$} ;
\draw (1.5,-5.5) node {$\mathcal V =\mathcal M = \{S_3\}$} ;
\draw (1.5,-6) node {$\mathcal O = \{R_2,R_3\}$} ;


\draw[dashed](5,-3)-- (8,-3);
\draw[dashed] (5,-3)-- (5,0);
\draw(5.5,-1.5)--(7.5,-1.5);
\draw (5.3,-1.5) node {$S_1$} ;
\draw (7.7,-1.5) node {$S_2$} ;
\draw (6.7,-1.1) node {$Q_1$} ;
\draw (6.7,-2.7) node {$R_4$} ;
\draw (5.7,-.5) node {$Q_2$} ;
\draw (7.7,-.5) node {$Q_3$} ;
\draw (6.7,-0.3) node {$R_1$} ;
\draw(7.7,-.8) node {$\bullet$} ;
\draw (6.5,-3.5) node {$ (X,\SS')$} ;
\draw(6.5,-1.5) node {$\bullet$} ;
\draw(5.7,-.8) node {$\bullet$} ;

\draw[->] (8.2,-2) -- (11.5,-.8);
\draw (9.8,-1.1) node {$K$} ;

\draw (11.5,-.5) node {$ (X,\mathcal T)$} ;

\draw (6.5,-4.5) node {Refinement $I$} ;
\draw (6.5,-5) node {$\mathcal A \menos \mathcal O= \{Q_3,R_1,S_1,S_2\}$} ;
\draw (6.5,-5.5) node {$\mathcal V \menos \mathcal M= \{Q_1,Q_2\}$} ;
\draw (6.5,-6) node {$ \mathcal M = \{S_3\}$} ;
\draw (6.5,-6.5) node {$\mathcal O = \{R_2,R_3\}$} ;


\draw[dashed](10,0)-- (13,0);
\draw[dashed] (10,0)-- (10,3);

\draw (12.7,1.5) node {$S_4$} ;
\draw (11.7,0.3) node {$R_4$} ;
\draw (11.7,2.7) node {$R_5$} ;
\draw(10.5,1.5)--(12.5,1.5);
\draw (11.5,-.5) node {$ (X,\mathcal T)$} ;
\draw (12.7,2.7) node {$Q_3$} ;
\draw(12.7,2.4) node {$\bullet$} ;

\draw (11.5,-4.5) node {Refinement $K$} ;
\draw (11.5,-5) node {$\mathcal A \menos \mathcal O= \{Q_3,R_4\}$} ;
\draw (11.5,-5.5) node {$\mathcal V =\mathcal M = \{Q_1,Q_2\}$} ;
\draw (11.5,-6) node {$\mathcal O = \{R_1,S_1,S_2\}$} ;

\end{tikzpicture}
\end{center}

In the simple refinement $J$ (resp. $K$) 
a stratum of $\mathcal M$ melts into a stratum of $\mathcal \SS'$ (resp. $\TT$) and, for each of them,  1 or 2 strata of $\mathcal O$ also disappear into a bigger stratum with same dimension: $S_3,R_2,R_3 \leadsto R_4$  (resp. $Q_1,S_1,S_2 \leadsto S$ or $Q_2,R_1 \leadsto R_5$). Among the strata of $\mathcal V$ those of $\mathcal M$ are the first to disappear.

}\end{example}

The objective of the following Lemmas is  to prove   \propref{111}: a refinement can be decomposed as a sequence of simple refinements.

\bl\label{refprop}
Let $(X,\SS) \triangleleft _I(X,\mathcal T)$ be a refinement with $\SS \ne \TT$. Then

\begin{enumerate}[(a)]
%


\item  $\mathcal M \ne \emptyset$.

\item  For each $S \in \SS$ there exists a source stratum $P\in \SS$  of $\bi SI$ with $S \preceq P$.

\item Given two strata $R,Q \in \TT$ with $R\preceq Q$ there exist two source strata $R',Q' \in \SS$ of $R$ and $Q$ respectively with $R' \preceq Q'$.

\end{enumerate}
\el
\bpr (a) Since $\SS \ne \TT$ then there exists a stratum $S \in \SS$ with $S \ne \bi SI$. If $\mathcal M=\emptyset$ then $\mathcal V = \emptyset$ and then $\bi S I =  \sqcup \{P\in \mathcal A \mid \bi PI = \bi SI\}$, open connected subsets of $\bi SI$  (cf. (S6)). By connectedness of $\bi SI$ we conclude that $ \{P\in \mathcal A \mid \bi PI = \bi SI\}$ contains just one element, necessarily $S$. The contradiction $\bi SI=S$ implies that $\mathcal M \ne \emptyset$. 

(b) By definition  we have
\begin{equation}\label{eq:SIdescom}
\bi S I =  \sqcup \{P\in \mathcal A \mid \bi PI = \bi SI\} \sqcup \left( \sqcup \{P\in \mathcal V \mid \bi PI = \bi SI\}\right),
\end{equation}
where the elements of the first term are open subsets of $\bi S I$.
This decomposition is locally finite (cf. (S1)). By dimension reasons,   $ O =\sqcup\{P\in \mathcal A \mid \bi PI = \bi SI\} =
\sqcup \{ \hbox{source strata of } \bi SI\}$ is an open dense subset of $\bi SI$ (cf. \rojo (S1), \negro (S4), (S6)). Condition $S \subset \ov O $ implies the existence of a	 source stratum $P$ of $\bi SI$ with  $S \cap \ov P \ne \emptyset$. Property (S3)  gives (b).

(c) Item (b) gives a source stratum $R' \in  \SS$ of $R$.  Since 
$ \sqcup\hbox{ \{source strata of $Q$\} }= \sqcup_{i\in I} Q_i $
is an open dense subset of $Q$ then $
R' \subset {R}  \subset \ov Q = \ov {\sqcup_{i\in I} Q_i } =_{(S1)}  \cup_{i\in I} \ov{Q_i } $. So, there exists $Q_i \in \mathcal S$, source stratum of $Q$, with  $R' \cap \ov {Q_i} \ne \emptyset$. Since $R' \preceq {Q_i}$ (cf. (S3)) we end the proof  taking $Q' = Q_i$. \epr


The subsets $\dos MI$ we study now play an important r\^ole in the construction of the simple decomposition of a refinement. They are the new strata on the first step of this decomposition.
\bl\label{MR}
Let $(X,\SS) \triangleleft_I (X,\mathcal T)$ be a refinement. Consider a stratum  $M \in \mathcal M$.
We define
\begin{equation*}\label{eq:defMI}
\dos M I=\sqcup \{Q  \mid Q \in \mathcal O \sqcup \mathcal M \hbox{ and }\bi QI =  \bi MI\}.
\end{equation*}
Then 
\begin{enumerate}[(a)]
\item $\dos MI$ is a connected open subset of $\bi MI$.
\item \rojo $Q \subset \dos MI$, if $Q \in \mathcal M$ and $\bi QI =\bi MI$, and\negro
\item $Q$ is an open subset of $\dos MI$, if $Q \in \mathcal O$ and  $\bi QI =\bi MI$.
\end{enumerate}
\el

\bpr\label{MR2}
Without loss of generality we can suppose $X = \bi MI$. 
We have $\bi SI =\bi MI$ for each $S \in \SS$.

(a) The subset $F =\sqcup \{ S \in \mathcal V \menos \mathcal M\}$ is a closed subset of $X$ (cf. \eqref{adh} and (S4)) not meeting $\dos MI$. Given $S \in  \mathcal V \menos \mathcal M$ we have $ \dim S \ne \dim X$ (since $S \in \mathcal V$) and $\dim S \ne \dim X-1$ (since $S \not\in \mathcal M)$.
Then, $F$ it is a locally finite union of sub-manifolds of $X$ whose codimension is at least 2 (cf. (S1), (S4)). So, $Y = X\menos F$ is a $\SS$-saturated connected open subset of $\bi MI$ containing $\dos MI$.

By construction, we have  $\mathcal V_Y = \mathcal M$, that is, $\SS_Y = \mathcal M \sqcup \mathcal A$. Let us suppose $\mathcal A \ne \mathcal O$. By dimension reasons, if $S \in \mathcal A \menos \mathcal O$ then $\ov S =S$ (cf. \eqref{adh} and (S4)). Property (S2) gives $S=Y$ and then $S=M$ which is impossible. So,  $\SS_Y = \mathcal M \sqcup \mathcal O$. Then  $\dos MI = Y$  which is a connected open subset of $\bi MI$.

(b) 
\rojo By definition. \negro


(c) Finally, $\dim Q = \dim \bi QI = \dim \bi MI$ implies that $Q$ is an open subset of $\bi MI$ (cf. (S6)). Since $Q \subset \dos MI$, condition (a) gives (c).
\epr

%
%

\bp\label{111}
Any refinement $(X,\SS) \triangleleft (X,\mathcal T)$, with $\SS \ne \TT$,  has a simple decomposition.
\ep

\bpr
Let us define $d_{\SS,\TT} = \dim \bi MI$ where  $M \in \mathcal M$. This number is independent of the choice of $M$ by definition of $\mathcal M$.
Condition $\SS \ne \TT$ implies  $\mathcal M\ne \emptyset$ (cf. \lemref{refprop} (a)) and therefore  $d_{\SS,\TT}\geq 0$.
We proceed by induction on $d_{\SS,\TT}$. If $d_{\SS,\TT}=0$ then the dimension of the strata of $\mathcal M$ is 0. Then $\mathcal V = \mathcal M$, which gives $\depth \mathcal V=0$. We conclude that the refinement is simple.

Now, in the inductive step,  we can suppose that $ d_{\SS,\mathcal T}>0$.
It suffices to construct a chain of refinements 
$
(X,\SS)  \preceq (X,\mathcal R)\preceq (X,\mathcal T),
$
where the first one is simple and 
$ d_{\mathcal R,\mathcal T} < d_{\SS,\mathcal T}$.

Let $M,N\in \mathcal M$ be two strata with $\dos MI \cap \dos NI \ne \emptyset$. This implies $\bi MI \cap \bi NI \ne \emptyset$
and therefore $\bi MI = \bi NI$. So, $\dos MI = \sqcup \{Q \in \mathcal O \sqcup \mathcal M \mid Q \subset \bi MI\} = \sqcup \{Q \in \mathcal O \sqcup \mathcal M \mid Q \subset \bi NI\} = \dos NI$.
We get the dichotomy $\dos MI = \dos NI$ or $\dos MI \cap \dos NI =\emptyset$. In order to avoid repetitions, we fix a family $\{ M_i \subset \mathcal M\mid i \in \nabla\}$ such that 
$
\cup \{ \dos MI \mid M \in \mathcal M\} = \sqcup \{ \dos M{i,I} \mid i \in \nabla\},
$
We define
\begin{equation}\label{R2}
\mathcal R = 
\mathcal S \menos (\mathcal O \sqcup  \mathcal M) \sqcup  \{ \dos M{i,I} \mid i \in \nabla\}.
\end{equation}
Let us verify all the properties.

\medskip

$\bullet$ {\bf {\boldmath $(X,\mathcal R)$} is a stratified space}. By definition of stable strata we have  $\sqcup \{ Q  \mid  Q\in \mathcal O \sqcup  \mathcal M\}  = \sqcup \{ \dos M{i,I} \mid i \in \nabla\}$. Then  $\mathcal R$ is a partition of $X$.
Condition (S1)$_\SS$ gives condition (S1)$_\mathcal R$. Condition (S2)$_\mathcal R$ comes from  (S2)$_\SS$ and \lemref{MR} (a).
 For the proof of (S3)$_\mathcal R$ and (S4)$_\mathcal R$, it suffices to prove:
 
 \smallskip
 
\begin{tabular}{lcl}
(a)  $S\cap \ov P \ne \emptyset$  & $\Rightarrow $ & $S \subset \ov P$ and $\dim S < \dim P$.\\
(b)   $S \cap \ov{\dos MI }\ne\emptyset$ & $\Rightarrow$ & $S \subset \ov { \dos MI }$ and $ \dim S < \dim \dos MI $,\\
(c)  $\ov S \cap \dos MI \ne \emptyset $ & $\Rightarrow$& $ \ov S \supset  \dos MI $ and $\dim \dos MI < \dim S$, \\
(d) $\ov{\dos NI} \cap {\dos MI }\ne\emptyset$ & $\Rightarrow$ &$\dos MI = \dos NI $.
\end{tabular}

\smallskip

\noindent where $S,P \in \mathcal S \menos (\mathcal O \sqcup  \mathcal M)$ and $M,N \in \mathcal M$. 
Let us see that.
\medskip

(a) It follows directly from (S3)$_\SS$ and (S4)$_\SS$.

\smallskip

(b) Locally finiteness of $\SS$ (cf. (S3)$_\SS$) gives
$\ov{\dos MI} =\cup \{\ov Q  \mid Q \in \SS \hbox{ and } Q \subset \dos MI\}$.
So, there exists  $Q\in \mathcal O \sqcup \mathcal M$ with $\bi QI =\bi  MI$ and  $S \cap \ov Q \ne \emptyset$. So,  $S \subset \ov Q \subset \ov {\dos MI}$ (cf. (S3)$_\SS$). 
Since $S \not\in \mathcal O \sqcup \mathcal M$ we get $S \ne Q$ and then
$\dim S
\stackrel{ (S4)_\SS} < \dim Q \stackrel{\eqref{eq:desing} }\leq \dim \bi QI = \dim \bi MI \stackrel{\lemref{MR} (a)}  = \dim \dos MI$.

%

\smallskip

(c) Condition $\ov S \cap \dos MI \ne \emptyset$ implies the existence of $Q \in \mathcal M \sqcup \mathcal O$ with $\bi QI
=\bi MI$ and $Q\preceq S$ (cf. (S3)$_\SS$).
By definition of stable strata we can suppose that $Q \in \mathcal M$, which implies $S \in \mathcal A$ since $S \not\in \mathcal M$. If $\bi MI =\bi SI$ then $S \in \mathcal O$, which is impossible. So, $\bi MI \ne \bi SI$. Since $\bi MI = \bi QI \stackrel{\eqref{SQSIQI}} \preceq \bi SI$  then $\bi MI  \prec \bi SI$ and we get  $\dim \bi MI  \stackrel{(S4)_\TT} \prec \dim \bi SI$.

Let us consider a virtual stratum $V \in \mathcal V$ included in $\bi SI$. There exists a maximal stratum $W \in \mathcal V$ with $V \preceq W$ (cf. \lemref{lem:fin}). Since $\bi VI \preceq \bi WI$ (cf. \eqref{SQSIQI}) then we have
$$
\dim \bi MI < \dim \bi SI = \dim \bi VI \stackrel{(S4)_{\mathcal S}} \leq \dim \bi WI,
$$
which is impossible by definition of $\mathcal M$. So,  the subset $\bi SI$ does not contain any virtual stratum. 

By connectedness of $\bi SI$ the formula \eqref{eq:SIdescom} implies that $\bi SI$ contains just one stratum of $\SS$, that is, $\bi SI = S$. We get $\dos MI \subset \bi MI = \bi QI \subset \ov{\bi SI} = \ov S$ and $\dim \dos MI = \dim \bi MI = \dim\bi QI \stackrel{(S4)_{\mathcal S}}   \leq \dim \bi SI = \dim S$ (cf. \lemref{MR} (a)).

\smallskip

(d) If $\ov{\dos NI }\cap \dos MI \ne \emptyset$ then $\ov{\bi NI }\cap \bi MI \ne \emptyset$ and therefore $\bi MI \preceq \bi NI $ (cf. (S3)$_{\TT}$). \lemref{MR} (a), (S4)$_\TT$ and \eqref{SQSIQI} give
$
\dim \dos MI = \dim \bi MI \leq \dim \bi NI = \dim \dos NI.
$
By definition of $\mathcal M$ we get that previous $\leq $ becomes $=$. Finally, condition (S4)$_\TT$ gives $\bi MI = \bi NI$ and therefore $\dos MI = \dos NI$.

\medskip

$\bullet$ {\bf \boldmath{$(X,\SS)  \triangleleft  (X,\mathcal R)$} is a simple refinement}.  
The strata of $\SS \menos (\mathcal M \sqcup \mathcal O)$ remain equal. The other strata verify condition (S6)$_{\SS,\mathcal R}$ following  \lemref{MR}. So, $(X,\mathcal S)  \triangleleft (X,\mathcal R)$ is a refinement.
The only strata whose dimension increases when passing from $\SS$ to $\RR$ are the strata of $\mathcal M$: $\dim M < \dim \dos MI$. So 
\begin{equation}\label{eq:MV}
\mathcal V_{\SS,\RR}= \mathcal M_{\SS,\RR}  = \mathcal M = \mathcal M_{\SS,\TT}
\end{equation}
 which gives $\depth \mathcal V_{\SS,\mathcal R} = \depth \mathcal M_{\SS,\TT} =0.
$

\medskip

$\bullet$  {\bf {\boldmath$(X,\mathcal R)  \triangleleft  (X,\mathcal T)$} is a  refinement with {\boldmath $d_{\mathcal R,\mathcal T} < d_{\SS,\mathcal T}$}}. 
\rojo A stratum $Q \in \SS \menos (\mathcal O \sqcup \mathcal M)$ belongs to $\mathcal R$ and is included  in $\bi QI$ from (S6)$_{\SS,\mathcal T}$.  \negro The strata $\dos MI$, $M \in \mathcal M$, are open subsets of $\bi MI$. So, $(X,\mathcal R)  \triangleleft (X,\mathcal T)$ is a refinement.
Since $\dim \dos MI = \dim \bi MI$, for each $M \in \mathcal M$, then $\dos MI \in \mathcal R$ is a source stratum. 
The same is true for the strata of $\mathcal A \menos \mathcal O$. This gives  $ \mathcal V_{\RR,\T} = \mathcal V \menos \mathcal M
=
\mathcal V_{\mathcal S,\mathcal T} \menos \mathcal M_{\mathcal S,\mathcal T}$
and therefore  $ d_{\mathcal R,\mathcal T}  < 
 d_{\mathcal S,\mathcal T}$. \epr

%

\section{CS-sets} \label{sec:CS}

\begin{quote}
The invariance result we study in this work applies to CS-sets, a weaker notion than that of stratified pseudomanifold. Here, a link  of a stratum is not necessarily a CS-set  but a filtered space \cite[example 2.3.6]{LibroGreg}.  We also describe the local structure of a simple refinement between two CS-sets. \end{quote}

\subsection{CS-sets}
 A filtered space $(X,\SS)$ is a \emph{$n$-dimensional CS-set} if
any regular stratum is an $n$-dimensional manifold,
and for any singular stratum $S \in \SS$ and for any $x \in S$ there exists a \rojo stratified homeomorphism\negro\footnote{The involved  stratifications are described in \exemref{Ejemplos} .}
$$
\varphi \colon (\R^i \times \rc L, \I \times \tc \L)  \to (V,\SS),
$$
where
\begin{enumerate}[(a)]
\item $V \subset X$ is a open subset containing $x$,
\item $(L;\L)$ is a compact filtered  space,
\item $\varphi(0,\tv) = x$ and $\varphi (\R^i \times \{ \tv \} ) = V \cap S$.

\end{enumerate}

The pair  $(V,\varphi)$ is a   $\SS$-\emph{conical chart}, or simply \emph{conical chart},  of $x$.
The \emph{link} of $\varphi$ is $(L,\L)$. 
Since the links are always  non-empty sets then the open subset $ X \backslash \Sigma $ is dense.
Closed strata of $\SS$ are exactly the minimal strata of $\SS$. On the other hand, the open strata of $\SS$ are the maximal strata of $X$, they coincide with the $n$-dimensional strata of $X$.

A {\em perverse CS-set} is a triple $(X,\SS,\ov p)$ where $(X,\SS)$ is a CS-set and $\ov p$ is a perversity on $(X,\SS)$. 


%

We find in \cite{MR2654596} a comparison between different notions of stratification. In this work we need the following property.

\bp
Any CS-set is a stratified space.
\ep
\bpr
Conditions (S2) and (S5) come  from definition. Property (S1) is proved in \cite[Lemma 2.3.8]{LibroGreg}. Let us verify (S3) and (S4). Since it is a local question, we  set $X  = \R^i \times \rc L$ with $S =  \R^i \times \{ \tv\}$. We can suppose  $S \ne S'$ and therefore $S' = \R^i \times Q \times ]0,1[$ for some $Q \in \mathcal L$. Since $\ov {S'} = \R^i \times \tc \ov Q$ we get $S \subset \ov {S'}$. We also have $\dim S < \dim S'$.
\epr

Consider a refinement  $(X,\SS) \triangleleft (X,\T)$ between two CS-sets, which makes sense following previous Proposition.
 The identity $I \colon (X,\SS) \to (X,\T)$ is in fact a stratified map (cf. \eqref{eq:desing}). We write $(X,\SS) \triangleleft _I (X,\T)$.
 
 \smallskip
 
Simple decompositions and CS-sets are compatible.

\bp\label{12}
A refinement $(X,\SS) \triangleleft (X,\mathcal T)$ between two different  CS-sets possesses a simple decomposition made up of  CS-sets.
\ep
\bpr
 It suffices to prove that the first element $(X,\mathcal R)$ of  the simple decomposition constructed in the proof of \propref{111} is a CS-set.

We use the following notation:
$(X,\mathcal S) {\triangleleft}_I (X,\mathcal R)
{\triangleleft}_J(X,\mathcal T)$
and
$(X,\mathcal S){\triangleleft}_E (X,\mathcal T)$ the original refinement.
We know that the manifolds $X\menos \Sigma_\SS$ and $X\menos \Sigma_\T$ are dense open subsets of $X$. So, $\dim (X,\SS) = \dim (X,\T)$.

It remains to construct a $\mathcal R$-conical chart of any point $x \in \Sigma_\SS$.
We consider the strata $S \in \SS$ and $\bi SI \in \mathcal R$ containing $x$. We distinguish two cases.

\smallskip

{\boldmath+  $S \in \mathcal A_{\SS,\mathcal R} $.}
Let  $\varphi \colon (\R^m \times \rc L, \I \times \rc \L) \to (V,\SS)$ be a $\SS$-conical chart  of $x$ with  link  $(L,\L)$. Since $\dim S =\dim \bi SI $ then 
 $
S\cap V =  \bi SI \cap V  = \varphi (\R^m \times \{\tv\}).
$
A  stratum of 
 $\mathcal R_{V\menos \bi SI }$ is a union of strata of $\SS_{V\menos S}$, then it is of the form  $\varphi( \R^m \times ]0,1[ \times \bullet)$. So, there exists a filtration $\L'$ on $L$ such that $\varphi \colon  (\R^m \times ]0,1[ \times L, \I \times\I \times \L') \to (V\menos \bi SI , \mathcal R)$ is a stratified homeomorphism. This is also the case for $\varphi \colon  (\R^m  \times\rc L, \I \times \rc\L') \to (V, \mathcal R)$.
We get that   $(\varphi, V) $ is a   $\mathcal R$-conical neighborhood of $x$ with link $(L,\L')$.

\smallskip

 {\boldmath + $S \in \mathcal V_{\SS,\mathcal R}$}. 
 Notice first that $\mathcal V_{\SS,\mathcal R} =  \mathcal M_{\SS ,\T}$
 (cf. \eqref{eq:MV}). 
 By construction of $\mathcal R$, the stratum of $\mathcal R$ containing the point $x$ is $\dos SI$ (cf. \eqref{R2}).
Let 
 $\varphi \colon (\R^m \times \rc L, \I \times \rc \L) \to (V,\TT)$  be a $\TT$-conical chart  of $x$ with  link  $(L,\L)$.
 It suffices to prove that $(V,\TT) = (V,\mathcal R)$.
 
 Since $\dos SI$ is an open subset of $\bi SI$ (cf. \lemref{MR} (a)) we can suppose
 \begin{equation}\label{eq:R=T}
\dos SI\cap V =  \bi SI \cap V  = \varphi (\R^m \times \{\tv\}).
\end{equation}
 By  definition of $\mathcal M_{\SS ,\T}$ we have that the only virtual  $\SS$-stratum on $V$ is $ V \cap S$.
 So, there are no virtual $(\SS,\T)$-strata on $V \menos \bi SI$. We conclude from    \lemref{refprop} (a) that
 $(V \menos \bi SI, \SS) = (V \menos \bi SI,\TT)$
 and 
 therefore $(V \menos \bi SI, \SS) = (V \menos \bi SI,\mathcal R)$. Using \eqref{eq:R=T} we get the claim $(V,\TT) = (V,\mathcal R)$.
 \epr

\begin{remarque}{\em
 Notice that the coarsening of a CS-set is not necessarily a CS-set. Let us give an example.
 
 \begin{multicols}{2}
 \includegraphics[scale=0.7]{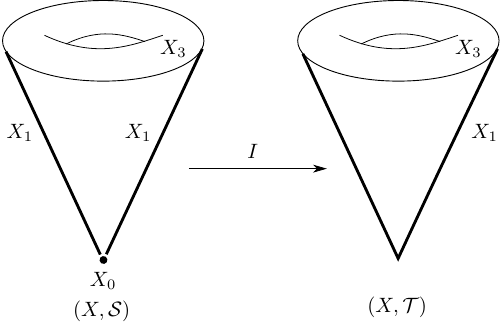}
 
 On the CS-set $(X,\mathcal S)$ the link of the strata of $X_1$ (resp. of the stratum $X_0$) is $S^1$ (resp. $T^2$). This lack of uniformity implies that the coarsening $(X,\mathcal T)$ is not a CS-set.

  \end{multicols}}
 
 \end{remarque}

\rojo
The following Proposition describes the construction of compatible conical charts associated to a simple refinement. We need two preliminary results. First one is a nice result  about cones from Stallings \cite{MR0180983, MR0239611}) (see \cite[Lemma 2.10.1]{LibroGreg} for a proof).

\begin{lemma}\label{lem:conos}
Let $X$ and $Y$ be two compact topological spaces, whose apexes are $\tx$ and $\ty$ respectively. If there is an open neighborhood $U$ of  $\tx$ in  $\rc X$ so
that $(U, \tx) $ and  $(\rc Y,\ty)$ are homeomorphic, then $(\rc X, \tx)$ and $(\rc Y,\ty)$ are homeomorphic.
\end{lemma}

\begin{lemma}\label{lemmaUg}
Let $(X,\SS) \triangleleft_I (X,\mathcal T)$ be a refinement. Consider a stratum $S \in \SS$ and a point $x\in S$. 
Let us suppose $b =\dim \bi SI -\dim S\geq 1$.
Then, there exists 
\begin{enumerate}[i)]
\item an open neighborhood $U\subset S^I$  of $x$, as small as necessary, 
\item two homeomorphisms $ (U,x) \xleftarrow g (\R^a \times \rc \S^{b-1} ,(0,\tv) )  \xrightarrow f (\R^{a+b},0) $ with 
$g^{-1}(S \cap U) = \R^a \times \{\tv\}$.
   \end{enumerate}
 Here, where $a=\dim S$ and $\tv $ is the apex of the cone $\rc \S^{b-1}$.
\end{lemma}
\begin{proof}\rojo
Let  $g\colon (\R^a \times \rc L, \I \times \rc \L) \to (V,\SS)$ be a $\SS$-conical chart  of $x\in S$ where $V$ is small as necessary.
Recall that
 $
S\cap V  = g (\R^a \times \{\tw\})
$
and $g(0,\tw) = x$,
where $\tw$ is the apex of $\rc L$. 
The subset $g^{-1} (V\cap \bi SI \menos S)$ is of the form $\R^a \times A \times ]0,1[$ for some subset $A \subset L$
 (cf. (S6)). We get the homeomorphism $g\colon\R^a \times \rc A \to V \cap \bi SI$. We take $U = V \cap \bi SI$. 
 Notice that $\dim S=a$ and $\dim \bi SI=a+b$.

 Without loss of the generality we can suppose that $U$ is included in an open subset of $\bi SI$  homeomorphic to $\R^{a+b}$. So, the cone $\rc (\S^{a-1} * A)$ is homeomorphic to an open subset of $\rc S^{a+b-1}$.
Then  $\rc(\S^{a-1}* A)$ is homeomorphic to $\rc S^{a+b-1}$ by a homeomorphism preserving the apexes (cf. \lemref{lem:conos}).  This gives $f$. A standard calculation gives that $A$ is a $(b-1)$-homological sphere. 
\end{proof}\negro

\begin{remarque}\label{New}\rm{
Let $x$ be a point of a virtual  stratum  $S$ . Previous Lemma shows that we locally have
$
\left\{
\begin{array}
{lll}
w&=& (0,\tv)\\
S &=& \R^a \times \{ \tv\}\\
\bi SI&=& \R^a \times \rc \S^{b-1}
\end{array}
\right.,
$
where $\S^{b-1}$ is a $(b-1)$-homological sphere and $\tv$ is the apex of the cone. 
This is the difference with the refinement of \cite{SRef1} (cf. \remref{RemS6}). 

The subset $\bi SI\menos S = \R^a  \times \S^{b-1} \times ]0,1[$ is a union of strata $Q\in\SS$.
Notice that $S \prec Q$.

Let us suppose that the refinement $(X,\SS)  \triangleleft_I  (X,\T)$ is simple. Since $\depth \mathcal V=0$ then all the strata of $\bi SI\menos S $ are source strata. If $b>1$ then $\bi SI/S$ is just a source stratum, when $b=1$ the subset $\bi SI\menos S$ is the union of two source strata. 

Globally, $\bi SI$ contains a discret family of strata of $\mathcal V = \mathcal M$, the rest being source strata.}
\end{remarque}

\bp\label{Max}
Let $(X,\SS)  \triangleleft_I  (X,\T)$ be a simple refinement between two CS-sets. 
We consider a point $x \in \Sigma_{\mathcal S}$ and we write $S\in \SS$ and $\bi SI \in \T$ the  strata containing $x$. 
 We distinguish three cases.
\begin{itemize}\setlength\itemsep{.5em}
\item[(a)] \emph{$S$ is a source stratum.} Then there exists 
\begin{itemize}\setlength\itemsep{.3em}
\item[-] a $\SS$-conical chart $(\psi,W)$  of $x\rojo \in S$, whose link is $(L,\mathcal L)$, 
\item[-]  a $\mathcal T$-conical chart $(\psi,W)$  of $x\rojo \in \bi SI$, whose  link is $(L,\L')$ for  \rojo some refinement  $\L'$ of $\L$, and 
\item[-] a
 commutatif diagram
$$
\xymatrix{
(\R^a \times \rc L, \I \times  \rc \mathcal L)  \ar[d]^I \ar[r]^-\psi  & (W,\mathcal S) \ar[d]^I\ \\
(\R^a \times \rc L, \I \times \rc \mathcal L') \ar[r]^-\psi & (W,\mathcal T)\\
}
$$
\end{itemize}
\negro

\item[(b)] \emph{$S$ is an exceptional stratum.} Let $b = \dim \bi SI - \dim S \geq 1$. Then there exists
\begin{itemize}
\item[-]  a  $\SS$-conical chart  $(\rojo \phi \negro, W)$ of  $x \rojo \in S$, whose link is $\rojo ( \S^{b-1},\I)$, 
\item[-] \rojo  a   chart  $(\psi, W)$ of  $x \rojo  \in \bi SI$.
 and
\item[-] a
 commutatif diagram
$$
\xymatrix{
(\R^a \times \rc \S^{b-1}, \I \times \rc \mathcal I)  \ar[d]^f \ar[r]^-\phi  & (W,\mathcal S) \ar[d]^I\ \\
(\R^{a+b}, \I ) \ar[r]^-\psi & (W,\mathcal T),\\
}
$$
where $f$ is a homeomorphism.
\end{itemize}
\negro

\item[(c)]  \emph{$S$ is a virtual stratum and $\bi SI$ is a singular stratum}. Let $b = \dim \bi SI - \dim S \geq 1$. Then there exists 
\begin{itemize}\setlength\itemsep{.3em}
 \item[-] a  $\SS$-conical chart  $(\phi, W)$ of  $x \rojo \in \bi SI$, whose link is $\rojo (\S^{b-1}*E,\mathcal E_{\star b-1})$, 
\item[-] a $\T$-conical chart   $(\psi,W)$ of $x\rojo \in S$, whose link is $(E;\mathcal E)$, and 
\item[-] a
 commutatif diagram
$$
\xymatrix{
(\R^a \times \rc (\S^{b-1} * E), \I \times  \rc \mathcal E_{\star b-1})  \ar[d]_{(f\times \id)(\id\times h^{-1})} \ar[r]^-\phi  & (W,\mathcal S) \ar[d]^I\ \\
(\R^{a+b} \times \rc E, \I \times \rc \mathcal E) \ar[r]^-\psi & (W,\mathcal T)\\
}
$$
where $h,f$ are defined in   \propref{join1} and \lemref{lemmaUg}.
\negro
\end{itemize}
\end{itemize}
\ep
\bpr 
\rojo (a) We consider a $\mathcal S$-conical chart of $x \in \bi SI$:
$$
\psi \colon (\R^a \times \rc L, \I \times \rc \mathcal L) \to (W,\mathcal S)\\
$$
Each stratum of $\mathcal T$ is a union of strata of $\SS$ (cf. (S6)). Since $\dim \bi SI = \dim S$ then $\bi SI \cap W = \psi (\R^a \times \{ \tw\}) = S \cap W$ where $\tw$ is the apex of the cone $\rc L$. The other strata of $(W,\mathcal T)$ are of the form 
$\psi (\R^a \times A \times ]0,1[)$ where $A$ is a union of some strata of $\L$.
So, there exists a filtration $\L'$ on $L$ such that $\psi\colon  (\R^a \times ]0,1[ \times L, \I \times\I \times \L') \to (W\menos \bi SI , \mathcal T)$ is a stratified homeomorphism. This is also the case for $\psi\colon  (\R^a  \times\rc L, \I \times \rc\L') \to (W, \mathcal T)$.
We get that   $(\psi, W) $ is a   $\mathcal T$-conical chart of $x \in \bi SI$ with link $(L,\L')$.
This gives (a).

\medskip

We treat the cases (b) and (c), where $\dim S < \dim \bi SI$.
 We have  $S \in \mathcal V = \mathcal M$, since the decomposition is simple.
 Notice that  $\depth \mathcal M = 0$.  Then we can suppose that $\mathcal M= \{ S \}$, since (b) and (c) are local questions.
 In other words,
$
\SS = \{S \} \sqcup  \mathcal A$. This implies $\SS = \mathcal T$ on $X\menos S$ and therefore on $\bi SI\menos S$.
The stratification $\SS$ induces on $\bi SI$ the stratification
\begin{equation}\label{ST}
 \{ S, (\bi SI\menos S)_{cc}\} \ \ \hbox{ with } S \preceq (\bi SI\menos S)_{cc}
\end{equation}

\smallskip

\rojo 
(b)  Take $(\phi,W) = (g,U)$ and $(\psi,W)= (g\circ f^{-1},U)$ from \lemref{lemmaUg}.
Since  $\bi SI$ is a regular stratum of $\TT$, then $W$ is an open subset of $X$.
The pair $(\psi,W)$ is a chart of $x \in \bi SI$.

Recall that $g^{-1}(U\cap S) = \R^a \times \{ \tv\}$, where $\tv$ is the apex of $\rc \S^{b-1}$.
Then,
$$
\phi \colon (\R^a \times (\S^{b-1} \times ]0,1[) ,\mathcal I \times \mathcal I) \xrightarrow f (f(\R^a \times \S^{b-1} \times ]0,1[) ,\mathcal I) \xrightarrow {g\circ f^{-1}} (U\menos S, \mathcal T) = (U\menos S,\mathcal S)
$$
is a stratified homeomorphism.
From \eqref{ST},  we conclude that 
$$\phi\colon ( \R^{a} \times \rc \S^{b-1}, \I \times \rc \I) \to (W,\mathcal S)$$ is a stratified homeomorphism an therefore $(\phi,W)$ is a $\mathcal S$-chart of $x \in S$ whose link is $(\S^{b-1},\I)$. 

By construction,we have $\phi = \psi \circ f$.
\smallskip

(c) We define $a=\dim S$, which gives $\dim \bi SI =a+b$.
Let   $\varphi \colon  (\R^{a+b} \times \rc E, \I \times \rc \mathcal E) \to (Q,\mathcal T)$   be a $\T$-conical chart of $x \in \bi SI$. The set $Q \cap \bi SI= \varphi (\R^{a+b} \times \{ \tw\})$, where $\tw $ is the apex of the cone $\rc E$, is an open neighborhood of $x \in \bi SI$. We consider $(U,g)$ given by  \lemref{lemmaUg} with $U \subset Q \cap \bi SI$. 
We define $W = \varphi(\pr \varphi^{-1}(U) \times \rc E )\subset Q$
 which is an open subset containing $x$. Here $\pr \colon \R^{a+b} \times \rc E  \to \R^{a+b}$ is the canonical projection.
 The stratified homeomorphism
$$
\gamma= \varphi \circ(  (\pr \circ \varphi^{-1} \circ g )\times \id_{\rc L}) \colon 
( \R^a \times \rc \S^{b-1}  \times \rc E, \I \times  \I \times \rc \mathcal E) \to (W,\mathcal T),
$$
verifies
$\gamma^{-1}(\bi SI) = \R^a \times \rc \S^{b-1}  \times \{ \tw\}$ and 
$\gamma^{-1}(S) = \R^a \times \{ (\tv ,  \tw)\}$.
We consider $f$ given by  \lemref{lemmaUg}.
 Notice that 
$$
\psi = \gamma \circ (f^{-1} \times \id)  \colon 
( \R^{a+b}  \times \rc E,   \I \times \rc \mathcal E) \to (W,\mathcal T)
$$
gives  a $\T$-conical chart   $(\psi,W)$ of $x$, whose link is $(E,\mathcal E)$.

From \eqref{ST} we get  that the stratification $\SS$ induces the stratification $\R^a \times \{( \tv , \tw)\} ,
\R^a \times  (\S^{b-1})_{cc} \times ]0,1[   \times \{ \tv\}$ on $\gamma^{-1}(\bi SI)$. In other words, the map
$$
\gamma  \colon 
( \R^a \times \rc \S^{b-1}   \times \{\tw\} , \I \times \rc \I \times \I) \to (W\cap  \bi SI,\mathcal S)
$$
is a stratified homeomorphism.
Since  all the strata of $(W\menos S,\SS)$ are source strata then $\SS= \mathcal T$ on $W\menos \bi SI$ (cf. \lemref{refprop} (a)).
  This gives that 
$$
\gamma  \colon 
( \R^a \times \rc \S^{b-1}  \times E \times ]0,1[ , \I \times \I \times \mathcal E \times I ) \to (W\menos \bi SI,\mathcal S)
$$
is a stratified homeomorphism.
Combining these two results, we get that
  $$
  \gamma\colon (\R^{a} \times  (\rc \S^{b-1} \times \rc E), \I \times (\I \times \rc \mathcal E)_{(\tv,\tw)}) \to (W,\mathcal S)$$
  is a stratified homeomorphism (cf. \eqref{ptplus}). Finally, using the stratified homeomorphism $h$ given by \propref{join1} we get the conical chart
  $$
\phi = \gamma  \circ(\Id \times h^{-1}) \colon  (\R^a \times \rc (\S^{b-1} * E),  \I \times \rc\mathcal E_{\star b-1} ) 
 \to (W,\mathcal S)
 $$
  whose link is $(\S^{b-1}*E,\mathcal E_{\star b-1})$.
  
  By construction, we have $\psi  (f\times \id)(\id\times h^{-1}) = \gamma  (f^{-1} \times \id) (f\times \id)(\id\times h^{-1})
  =  \gamma (\id\times h^{-1}) = \phi $
\epr

\subsection{\bf Charts and perversities.} \rojo
Consider a CS-set $(X,\mathcal Q)$ and a conical chart 
$$\beta \colon (\R^\ell \times \rc L, \I \times \rc L) \to (V, \mathcal Q)$$
 of a point $x \in Q$, where $Q \in \mathcal Q^{sing}$. A perversity $\ov q$ on $(X,\mathcal Q)$ induces a perversity on the LHS which is described as follows. By restriction, $\ov q$ determines a perversity on $(V,\mathcal Q)$ still denoted by $\ov q$. We call again $\ov q$ the perversity induced on $(\R^\ell \times \rc L, \I \times \rc L)$ by the stratified homeomorphism $\beta$. A such perversity is determined by a perversity on the link $(L,  \L)$, also denoted by $\ov q$, and by the number $\ov q(Q)$ following these formul\ae:
 \begin{equation}\label{eq:conv}
\ov q(\underbrace{\R^\ell \times P \times ]0,1[}_{=_{\beta} V \cap R}) =   \ov q(P)  =  \ov q (R) \ \ \ \hbox{ and } 
\ \ \  \ov q(\R^m \times \{ \tw \}) = \ov q(\tw) = \ov q(Q),   
\end{equation}
where 
where $R \in \mathcal Q$, $R\ne Q$,  and 
$\tw$ is the apex of $\rc L$.\negro

\smallskip

 We study the behavior of the perversities concerning the charts of  \propref{Max}. More precisely, if $I \colon (X,\SS) \to (X,\TT)$ is the stratified map induced by the refinement $(X,\SS)\triangleleft (X,\TT)$ and $\ov p$ is a perversity on $(X,\SS)$ \rojo we study the perversities $\ov p$ and $I_\star \ov p$ \negro under the previous conventions \eqref{eq:conv} following the three cases presented in \propref{Max}.
 
 \smallskip
 
 (a) The map $I \colon (V,\SS) \to (V,\TT)$ becomes \rojo the identity $$I \colon (\R^m \times \rc L, \I \times \L) \to
 (\R^m \times \rc L, \I \times \L'),
 $$\negro which  is a stratified map. Recall that $\varphi(V \cap S) = \R^m \times \{\tv\} = \varphi (V \cap \bi SI)$ 
 (cf. \eqref{eq:R=T}).
  Previous conventions give the equalities 
 \begin{equation} \label{eq:SVIp}
\rojo  \ov p(\R^m\times \{ \tv \}) = \ov p ( \tv)  =  \ov p (S)  \ \hbox{ and } \ \    I_\star \ov p(\R^m\times \{ \tv \})= I_\star\ov p ( \tv) =   I_\star\ov p(\bi SI) .
 \end{equation}

 \smallskip

 (b) The map $I \colon (W,\SS) \to (W,\TT)$ becomes \rojo the stratified homeomorphism 
  $$
  \psi^{-1} \circ \phi = f \colon
 (\R^{a} \times  \rc \S^{b-1} , \I \times  \rc \I) \to 
 (\R^{a+b} , \I )
  $$ \negro
 where the strata $S , (\bi SI\menos S)_{cc}\in \SS$ and $\bi SI \in \T$ become respectively $\R^{a} \times 
\{ \tu \}, \R^{a} \times (\S^{b-1})_{cc} \times ]0,1[$ and $\R^{a+b}$,
where $\tu$ is the apex of the cone $\rc \S^{b-1}$. \negro
We have $I_\star \ov p= \rojo \ov 0$ and previous convention \eqref{eq:conv} gives the formula
\begin{equation}\label{eq:b}
\begin{array}{ccccc}
\rojo \ov p (\R^a\times \{\tu\})&= &\ov p (\tu) &= & \ov p(S) \\[,2cm] 
 \ov p (\R^a\times (\S^{b-1})_{cc} \times ]0,1[)&=& \ov p ((\S^{b-1})_{cc} ) &=&  \ov p((\bi SI\menos S)_{cc}) ,
 \end{array}
 \end{equation}
\rojo where $\tu$ is the apex of the cone $\rc \S^{b-1}$. \negro
 \smallskip

 (c) The map $I \colon (W,\SS) \to (W,\TT)$ becomes the stratified homeomorphism
 \bee\label{eq}
 \psi^{-1} \circ \phi  
\colon (\R^{a} \times  \rc (\rojo \S^{b-1}\negro * E), \I \times  \rc \mathcal E_{\star b-1}) \to 
 (\rojo \R^{a+b} \times  \rc  E, \I \times \rc \mathcal E)
 \eee
 given by \rojo
$(f\times \id ) \circ (\id \times h)$ (cf. \eqref{refirefi} and \lemref{lemmaUg} ). \negro
 The strata $S , (\bi SI\menos S)_{cc}\in \SS$ and $\bi SI \in \T$ become respectively $\R^{a} \times 
\{ \tu \}, \R^{a} \times  \left(S^{b-1}\right)_{cc} \times ]0,1[$ and $\rojo \R^{a+b}  \times \{ \tw\}$,
where $\tu$ is the apex of the cone $\rc (\rojo \S^{b-1} \negro * E)$ and $\tw$ is the apex of the cone $\rc  E$.
The other strata oh the LHS are source strata.
Previous convention \eqref{eq:conv} gives the formul\ae

\begin{equation} \label{eq:SRQT}
\begin{array}{ccccc}
\ov p(\underbrace{\R^{a} \times  \rojo \rc \S^{b-1}\negro \times Q \times ]0,1[}_{=_\phi \rojo  W \negro\cap T}) &=&\ov p (Q)
 &= & \ov p(T) \\[,2cm]
  I_\star \ov p (\underbrace{\rojo \R^{a+b} \negro \times  Q \times ]0,1[}_{=_{\psi} \rojo  W \negro \cap \bi TI})&=& I_\star \ov p (Q) &=&  I_\star \ov p (\bi TI)  \\[,2cm] 
\ov p ( \R^{a} \times  \left(\rojo \S^{b-1} \negro \right)_{cc} \times ]0,1[) &=&  \ov p(  \left(\rojo \S^{b-1} \negro \right)_{cc} ) &=&  \ov p ((\bi SI\menos S)_{cc})  \\[,2cm]
 \ov p (\R^a\times \{\tu\})&=& \ov p (\tu) &=&  \ov p(S)  \\[,2cm]
 I_\star \ov p ( \rojo \R^{a+b} \negro \times \{ \tw \}) 
 & =& I_\star \ov p(\tw)  &=& I_\star \ov p (\bi SI) 
\end{array}
\end{equation}
where $\rojo T \in \mathcal T$ with $\bi TI \ne \bi SI$.

\subsection{Comparison tools} The invariance results we prove in two final sections follow the same pattern: a stratified map induces an isomorphism in (co)homology. To achieve this objective we use these two results. The first one is used with compact supports (cf.  \cite[Theorem 5.1]{CST3}) and the second one is used with closed supports (cf. \cite[Proposition 13.2]{CST5}).

\bp\label{Met1}
Let $\mathcal F_{X}$ be the category whose objects are (stratified homeomorphic to) open subsets
of a given  CS set $(X,\SS)$ and whose morphisms are  stratified homeomorphisms and inclusions.
Let  $\mathcal Ab_{*}$ be the category of graded abelian groups. Let $F_{*},\,G_{*}\colon \mathcal F_{X}\to \mathcal Ab$
be two functors and
 $\Phi\colon F_{*}\to G_{*}$ a natural transformation satisfying
 the conditions listed
below.
\begin{enumerate}[\rm (a)]

\item $F_*$ and $G_{*}$ admit exact Mayer-Vietoris sequences and the natural transformation $\Phi$ 
 induces a commutative diagram between these sequences,

\item If $\{U_{\alpha}\}$ is a increasing collection of open subsets of $X$  and $\Phi\colon F_{*}(U_{\alpha})\to G_{*}(U_{\alpha})$ is an isomorphism for each $\alpha$, then $\Phi\colon F^{*}(\cup_{\alpha}U_{\alpha})\to G^{*}(\cup_{\alpha}U_{\alpha})$  is an isomorphism.

\item Consider $(\varphi,V)$ a conical chart of a singular point $x \in S$ with $S \in \SS$.
If 
$\Phi\colon F^{*}(V\menos S)\to G^{*}(V\menos S)$
is an isomorphism, then so is
$\Phi\colon F^{*}(V)\to G^{*}(V)$. 
%
%

\item If $U$ is an open subset of X contained within a single stratum and homeomorphic
to an Euclidean space, then $\Phi\colon F^{*}(U)\to G^{*}(U)$ is an isomorphism.
\end{enumerate}
Then $\Phi\colon F^{*}(X)\to G^{*}(X)$ is an isomorphism.
\ep

\begin{proposition}\label{Met2}
Let $\mathcal F_{X}$ be the category whose objects are (stratified homeomorphic to) open subsets
of a given paracompact second countable\footnote{In the original reference \cite[Proposition 13.2]{CST5} the pseudomanifold $X$ needs to be  separable. 
 A second countable space is separable (see for example \cite[Theorem 16.9]{Wil}) so we can change this last hypothesis in the statement of the Proposition.}
CS-set $X$ and whose morphisms are  stratified homeomorphisms and inclusions.
Let  $\mathcal Ab_{*}$ be the category of graded abelian groups. Let $F^{*},\,G^{*}\colon \mathcal F_{X}\to \mathcal Ab$
be two functors and
 $\Phi\colon F^{*}\to G^{*}$ a natural transformation satisfying
 the conditions listed
below.
\begin{enumerate}[(a)]

\item $F^{*}$ and $G^{*}$ admit exact Mayer-Vietoris sequences and the natural transformation $\Phi$ 
 induces a commutative diagram between these sequences,

\item If $\{U_{\alpha}\}$ is a disjoint collection of open subsets of $X$  and $\Phi\colon F_{*}(U_{\alpha})\to G_{*}(U_{\alpha})$ is an isomorphism for each $\alpha$, then $\Phi\colon F^{*}(\bigsqcup_{\alpha}U_{\alpha})\to G^{*}(\bigsqcup_{\alpha}U_{\alpha})$  is an isomorphism.

\item Consider $(\varphi,V)$ a conical chart of a singular point $x \in S$ with $S \in \SS$.
If 
$\Phi\colon F^{*}(V\menos S)\to G^{*}(V,\menos S)$
is an isomorphism, then so is
$\Phi\colon F^{*}(V)\to G^{*}(V)$. 
%
%

%

\item If $U$ is an open subset of X contained within a single stratum and homeomorphic
to an Euclidean space, then $\Phi\colon F^{*}(U)\to G^{*}(U)$ is an isomorphism.
\end{enumerate}
Then $\Phi\colon F^{*}(X)\to G^{*}(X)$ is an isomorphism.
\end{proposition}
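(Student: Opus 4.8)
The plan is to reduce the statement to \cite[Proposition 13.2]{CST5}, from which it differs only in that separability has been replaced by the stronger hypothesis of second countability (cf. the footnote and \cite[Theorem 16.9]{Wil}); paracompactness, the CS-set structure and the conditions (a)--(d) are exactly those used there. So it suffices to recall why those hypotheses force $\Phi$ to be an isomorphism, which is a Bredon--Mayer--Vietoris induction.

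Call an open subset $U\subseteq X$ \emph{good} when $\Phi\colon F^{*}(U)\to G^{*}(U)$ is an isomorphism. Three elementary facts start the induction. First, the five lemma applied to the two Mayer--Vietoris ladders of (a) shows that if $U$, $V$ and $U\cap V$ are good, then $U\cup V$ is good. Second, (b) says that a disjoint union of good open sets is good. Third, (d) supplies the basic good sets: Euclidean open subsets of a single stratum. A Bredon-type patching --- exhaust an open subset of $\R^{n}$ by convex pieces, close up under finite unions via Mayer--Vietoris, and organise the rest into countably many pairwise disjoint families using paracompactness and second countability --- promotes (d) to the statement that every open subset of $X$ contained in a single stratum is good, and hence, through (b), that $X\menos\Sigma$, a disjoint union of manifolds, is good. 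In particular the case of depth $0$ (where $X=X\menos\Sigma$) is settled.

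The singular strata are then peeled off by induction on the depth of $(X,\SS)$. Given a conical chart $\varphi\colon(\R^{i}\times\rc L,\I\times\rc\L)\to(V,\SS)$ of a point $x\in S$, the open set $V\menos S$ is stratified homeomorphic to $\R^{i}\times L\times ]0,1[$, an open subset of $X$ whose depth equals that of $L$ and hence is strictly smaller than that of $X$; the inductive hypothesis makes it good, and then (c) makes $V$ good. Covering $X$ by $X\menos\Sigma$ together with such conical charts $V$ and patching once more with the Mayer--Vietoris/disjoint-union machinery --- after checking that the finite intersections one meets are again good, which reduces to a further Bredon patching inside the model spaces $\R^{i}\times\rc L$ --- yields that $X$ is good, which is the assertion.

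The crux is precisely this last globalisation. Condition (a) handles only binary unions and condition (b) only \emph{disjoint} families, so passing from the purely local information provided by (c) and (d) to a statement about all of $X$ forces one to refine open covers to locally finite ones, colour them into countably many disjoint subfamilies, and alternate finite Mayer--Vietoris unions with disjoint unions. This is exactly where the second-countability (hence separability) assumption is indispensable, and the reason it is convenient to adopt it in place of the bare separability of \cite[Proposition 13.2]{CST5}.
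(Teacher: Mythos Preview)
Your proposal is correct and matches the paper's approach: the paper gives no proof at all, simply citing \cite[Proposition 13.2]{CST5} and noting in the footnote that second countability implies separability (\cite[Theorem 16.9]{Wil}). You reproduce exactly this reduction in your first paragraph and then go further by sketching the underlying Bredon--Mayer--Vietoris induction from the cited reference, which is more than the paper itself provides.
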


\begin{remarque}\label{Uno} {\em
A priori, in order to apply \propref{Met1} and \propref{Met2}   one needs to verify condition (c)  for any conical chart of $X$. Reading carefully the proof of these Propositions one notices that it is enough to verify (c)  for a neighborhood basis of each point $x$ of $X$. 

Associated to a conical chart $\varphi \colon \R^i \times \rc L \to V$ of the point $x$, we can  construct a neighborhood basis
$B_x =\big\{ \varphi_\varepsilon \colon ]-\varepsilon, \varepsilon [^i \times \rc_\varepsilon L \to V_\varepsilon \mid \varepsilon >0 \big\}$
of $x$, where $\rc _\varepsilon = L \times [0,\varepsilon[ / L \times \{ 0\}$ and $V_\varepsilon = \varphi(]-\varepsilon, \varepsilon [^i \times \rc_\varepsilon L )$. Notice that all this open subsets are stratified homeomorphic after homotethy.

So, in order to apply \propref{Met1} and \propref{Met2}   it suffices to verify conditions (c) for a conical chart of each point of $X$.

Notice that the family $F_x =\big\{ \varphi_\varepsilon \colon [-\varepsilon, \varepsilon ] \times \tc_\varepsilon L \to V_\varepsilon \mid \varepsilon >0 \big\}$ is a 1neighborhood basis for the point $x$ made up of closed subsets. In other words, the space $X$ is locally compact.

}\end{remarque}

\subsection{ Morphisms.}\label{subsec:morfismo}
Consider a  stratified map $f \colon (X,\SS,\ov p) \to (Y,\T,\ov q)$ between two perverse CS-sets. If the perversities verify $f^* D\ov q \leq D \ov p$ then we have the following induced morphisms.

\begin{enumerate}[(a)]

\item 
$ f_* \colon \lau H {\ov p} * {X;\SS} \to \lau H {\ov q} * {Y;\T}
$ and
$ f^* \colon \lau H *{\ov p}  {X;\SS} \to \lau  H * {\ov q}  {Y;\T}
$
(cf. \cite[Proposition 3.11]{CST3}).
 
 \item 
$ f^* \colon \lau H *{\ov p,c} {X;\SS} \to \lau H *{\ov q,c}  {Y;\T} 
$
if the map $f$ is a proper map. This comes from \ref{mpti}.d and from the fact that the family $\{f^{-1}(K) \mid K \subset Y \hbox{ compact}\}$ is cofinal in the family of compact subsets of $X$ if $f$ is proper.

\item 
$ f_* \colon \lau \gH {\ov p} * {X;\SS} \to \lau \gH {\ov q} * {Y;\T},
$
 if $f(\dos X {\ov{p}}) \subset \Sigma_{(Y,\T)}$ 
where $\dos X {\ov p} =\sqcup \{\ov S \mid S \in \SSsing \hbox{ and } \ov p (S) > \ov t (S)\}$ (cf. \cite[Proposition 3.11]{CST3}).
An adapted version of this result is needed in this work (see \lemref{masfuerte}).

\item 
$ f_* \colon \lau H {BM,\ov p}* {X;\SS}  \to  \lau H {BM,\ov q}* {X,\TT} 
$
and
$ f_* \colon \lau \gH {BM,\ov p}* {X;\SS}  \to  \lau \gH {BM,\ov q}* {X,\TT} 
$
(cf. \ref{mpti}.d). \negro

\end{enumerate}
If the perversities verify $f^* \ov q \leq  \ov p$ then we have the following induced morphisms:

\begin{enumerate}[(e)]

\item 
$ f^* \colon \lau \IH * {\ov q}  {Y;\T} \to \lau \IH * {\ov p}  {X;\SS}
$
if 
$f^\star\ov q \leq  \ov p$
(cf. \cite[Theorem A]{CST5}).

\item [(f)]
$ f^* \colon \lau \IH * {\ov q,c}  {Y;\T} \to \lau \IH * {\ov p,c}  {X;\SS}
$
if the map $f$ is a proper map.This comes from \ref{mpbi}.d and from the fact that the family $\{f^{-1}(K) \mid K \subset Y \hbox{ compact}\}$ is cofinal in the family of compact subsets of $X$ if $f$ is proper.

\end{enumerate}
\negro

\section{Refinement invariance for CS-sets}
\begin{quote}
We prove the main result of this work: the refinement invariance of all the homologies and cohomologies of \secref{homcoho}  :  \thmref{thm:Coars} for coarsenings and \thmref{thm:refi} for refinements. In the first case, we need to work with a particular type of perversities, the $K$-perversities. \end{quote}

\subsection{$K$-perversities} These are the perversities for which refinement invariance holds. Roughly speaking, they are $M$-perversities defined on the LHS of a refinement $(X,\SS) \triangleleft (X,\TT)$ whose restriction to the strata of the RHS is a classical perversity verifying the growing condition of a Goresky-MacPherson perversity.

\begin{definition}
Let $(X,\SS) \triangleleft (X,\mathcal T)$ be a refinement. A perversity $\ov p$ on $(X,\SS)$ is a \emph{$K$-perversity}  if it verifies conditions (K1) and (K2).
\begin{enumerate}[(K1)]
\item We have, for any strata $S,Q \in \SS$ with $S\preceq Q$ and $\bi SI = \bi QI$,
\begin{equation}\label{eq:magica}
 \ov p(Q) \leq \ov p(S) \leq \ov p(Q) + \ov t(S) - \ov t(Q),
\end{equation}
\item We have, for any strata $S,Q \in \SS$ with $\dim S =\dim Q$ and $\bi SI = \bi QI$,
\begin{equation}\label{eq:magicaTris}
 \ov p(Q) = \ov p(S) ,
\end{equation}
\end{enumerate}
\end{definition}
\begin{remarque}\label{rem:Kper}
{\rm
Notice these two conditions are equivalent to conditions
\begin{equation}\label{eq:magicaBis}
 D\ov p(Q) \leq D\ov p(S) \leq D\ov p(Q) + \ov t(S) - \ov t(Q) \ \ \hbox{ and } D \ov p(Q) = D \ov p(S).
\end{equation}
Also, condition \eqref{eq:magica}  is always verified when both strata $S$ and $Q$ are regular strata. If the stratum $Q$ is regular and the stratum $S$ is singular (id est, $S$ is an exceptional stratum), then  condition \eqref{eq:magica} becomes
\begin{equation}\label{eq:regS}
0 \leq \ov p(S) \leq \ov t(S).
\end{equation} In particular, the existence of a $K$-perversity implies the non-existence of 1-exceptional strata  since $0\leq \ov t(S)=-1$ is not possible.
}\end{remarque}

\medskip

Before proving the main results of this work, we need some technical Lemmas.

\begin{lemma}\label{lem:I*p}
Let  $(X,\SS)  \triangleleft_I  (X,\T)$ be a refinement. Any $K$-perversity $\ov p$ verifies
$ 
I_\star\ov p (T) =  \ov p (S)
$ 
for each $T \in \TT$ where $S \in \SS$ is a source stratum of $T$.
\end{lemma}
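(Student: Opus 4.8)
The statement asks to show that for a $K$-perversity $\ov p$ on $(X,\SS)$ and any stratum $T \in \TT$, the value $I_\star\ov p(T) = \min\{\ov p(Q) \mid \bi QI = T\}$ is actually attained at *every* source stratum $S$ of $T$, hence equals $\ov p(S)$ for all such $S$. First I would dispose of the regular case: if $T$ is regular then $\ov p(S) = 0$ for every source stratum $S$ (source strata of a regular stratum are regular, by the dimension equality), and $I_\star\ov p(T) = 0$ by definition, so the equality is trivial. So assume $T \in \TTsing$.

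The core of the argument is to show two things: (1) $\ov p(S) = \ov p(S')$ for any two source strata $S, S'$ of $T$, so that the value $\ov p(S)$ does not depend on the choice of source stratum; and (2) this common value is the minimum over *all* strata (source and virtual) $Q$ with $\bi QI = T$. For (1): given two source strata $S, S'$ of $T$, both sit inside $T$ with $\dim S = \dim S' = \dim T$. The subset $\sqcup\{\text{source strata of }T\}$ is open and dense in $T$ (this is the dense-open-set observation from the proof of \lemref{refprop}(b)); using connectedness of $T$ one can find a chain of source strata of $T$ linking $S$ to $S'$ where consecutive ones have intersecting closures — but since they all have the same dimension $\dim T$ and are embedded submanifolds of $T$ of full dimension, in fact there is just one of them unless... hmm, actually full-dimensional connected open submanifolds of the connected manifold $T$ need not be unique, but condition (K2) directly gives $\ov p(S) = \ov p(S')$ whenever $\dim S = \dim S'$ and $\bi SI = \bi S'I = T$, which is exactly our situation. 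So (1) is immediate from (K2) alone.

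For (2): let $Q \in \SS$ be any stratum with $\bi QI = T$; I must show $\ov p(S) \le \ov p(Q)$ for a source stratum $S$ of $T$ — equivalently, that no virtual stratum lowers the minimum. By \lemref{refprop}(b) applied to $Q$, there is a source stratum $P$ of $\bi QI = T$ with $Q \preceq P$. Since $\bi QI = \bi PI = T$ and $Q \preceq P$, condition (K1), inequality \eqref{eq:magica}, gives $\ov p(P) \le \ov p(Q)$. Combining with (1), $\ov p(S) = \ov p(P) \le \ov p(Q)$ for every $Q$ with $\bi QI = T$, and since $\ov p(S)$ itself occurs among these values, $I_\star\ov p(T) = \min\{\ov p(Q) \mid \bi QI = T\} = \ov p(S)$. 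The main (mild) obstacle is just being careful that \lemref{refprop}(b) is applied to the right refinement and that the case $\ov p(Q) = \infty$ or $-\infty$ causes no trouble — but the inequalities in (K1) and the definition of $\min$ with the convention $\inf\emptyset = \infty$ handle the extended integers uniformly, so there is nothing delicate there.
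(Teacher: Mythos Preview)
Your proposal is correct and follows essentially the same route as the paper's proof: use \lemref{refprop}(b) together with the first inequality in (K1) to reduce the minimum defining $I_\star\ov p(T)$ to a minimum over source strata, then invoke (K2) to see that all source strata of $T$ yield the same value. Your separate treatment of the regular case and the brief digression about connectedness before settling on a direct application of (K2) are harmless extras; the paper simply omits them.
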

\begin{proof}
We know from  \secref{subsec:perv} that $I_\star\ov p (T) = \min \{ \ov p (Q) \mid Q \in \SS \hbox{ and } \bi QI = T \}$.
For any $Q \in \SS$ with $\bi QI =T$ there exists a source stratum $S \in \SS$  with $Q\preceq S$ and $\bi SI = T$ (cf. \lemref{refprop} (b)). 
So, $I_\star\ov p (T) =_{(K1)}  \min \{ \ov p (S) \mid S \in \SS \hbox{ source stratum of } T \}$. Condition
\eqref{eq:magicaTris}  ends the proof.
\end{proof}

\bl\label{ll}
Let  $(X,\SS) \triangleleft  (X,\T)$ be a refinement.
For any  $K$-perversity  $\ov p$ we have
  $I^\star  DI_\star\ov p \leq D\ov p$.
 \el
\bpr
Given a stratum $ S \in \SS$, there exists  a source stratum $Q \in \SS$ of $\bi SI$ verifying $S\preceq Q$ (cf. \lemref{refprop} (b)).
We have
\begin{eqnarray*}
I^\star D I_\star\ov p(S) &= &DI_\star\ov p (\bi SI) =
\ov t(\bi SI) - I_\star\ov p(\bi SI) \stackrel{source} =
\ov t(\bi QI) - \ov p(Q) \leq_{(1)} \ov t( Q) - \ov p(Q) \\
&=& D\ov p(Q) \stackrel{(K1)} \leq D\ov p (S),
\end{eqnarray*}
where $_{(1)}$ comes from \eqref{eq:desing} except when  $Q$ is an exceptional stratum.
In this case $\codim Q \geq 2$ and therefore $\ov t (\bi QI )=0 \leq \ov t (Q)$ (cf. \remref{rem:Kper}).
\epr

\bl\label{masfuerte}
Let  $(X,\SS)  \triangleleft_I  (X,\T)$ be a refinement. between two CS-sets. For any $K$-perversity $\ov p$  we have the induced morphisms $I_\star \colon \lau \gH  {\ov  p} * {X;\SS} \to \lau \gH  {I_\star\ov p}* {X;\mathcal T}$, 
$I^\star \colon  \lau \gH  * {I_\star\ov p}  {X;\mathcal T} \to \lau \gH  * {\ov  p}    {X;\SS} $, $I^\star \colon  \lau \gH  * {I_\star\ov p,c}  {X;\mathcal T} \to \lau \gH  * {\ov  p,c}    {X;\SS} $.
\el
\bpr

If we prove that  the operator $I_\star \colon \lau \gC  {\ov  p} * {X;\SS} \to \lau \gC  {I_\star\ov p}* {X;\mathcal T}$ 
is well defined then, by duality, the operator
$I^\star \colon  \lau \gC  * {I_\star\ov p}  {X;\mathcal T} \to \lau \gC  * {\ov  p}    {X;\SS} $ is also well defined.
Following \cite[Proposition 3.11]{CST3} and  \lemref{ll} it suffices to prove $I(\dos X {\ov p}) \subset \Sigma_{(X,\T)}$. If this is not true, then there exist $Q\in \SS$ and $S \in \SSsing$ with $Q \preceq S$, $\ov p(S) > \ov t (S)$ and $\bi QI \in\TTreg$. Since $\bi QI \preceq_{\eqref{SQSIQI}} \bi SI$ then $\bi SI \in \SSreg$. Then $S$ is an exceptional stratum. This is impossible (cf.  \eqref{eq:regS}).
Last point  comes  from \ref{mpti}.d.
\epr

\bl\label{DI} 
Let $(X,\SS)  \triangleleft_I(X,\mathcal R)  \triangleleft_J(X, \T)$  be two  refinements.
If  $\ov p$ is a $K$-perversity on $(X,\SS)$, relatively to the refinement $E = J \circ I$,  then 
\begin{itemize}
\item[(a)] $\ov p$ is a $K$-perversity, relatively to the refinement $I$, and 
\item[(b)] $I_\star\ov p$ is a $K$-perversity, relatively to the refinement $J$.
\end{itemize}
\el
\bpr
Property (a) comes directly from the fact that $\bi SI = \bi SJ$  implies $
\bi SE = \bi{\bi SI} J = \bi{\bi QI} J = \bi QE$,  if $S,Q \in \SS$. Let us prove (b) in two steps.\smallskip

(K1)$_{I_\star\ov p}$ Consider  $S , Q \in \mathcal R$ with   
 $S \preceq  Q $ and $\bi SJ = \bi QJ$.  \lemref{refprop} (c)  gives two $I$-source strata $S',Q' \in \SS$, of $S$ and $Q$ respectively, with  $S' \preceq Q'$. We have 
 $I_\star\ov p(S) = \ov p(S')$ and $I_\star\ov p(Q) = \ov p(Q')$ (cf. \lemref{lem:I*p}) and 
 \begin{equation}\label{eq:ayuda}
 \bi {S'}E = 
\bi{ \bi {S'}I }J  = \bi SJ = \bi QJ = \bi{ \bi {Q'}I }J  = \bi {Q'}E
\end{equation}
 then  
\begin{eqnarray*}
I_\star\ov p(Q) &=& \ov p(Q')  \stackrel{(K1)_{\ov p}}  \leq \ov p(S') = I_\star \ov p(S) \stackrel{ (K1)_{\ov p}}  \leq \ov p(Q') + \ov t( S') - \ov t (Q') 
\\
&\stackrel{source \, strata} =&
 I_\star\ov p (Q) + \ov t( \bi {S'}I )-\ov t(\bi {Q'}I )
=
 I_\star\ov p (Q) + \ov t (S) -\ov t(Q).
\end{eqnarray*}

\smallskip

(K2)$_{I_\star\ov p}$ Consider  two strata $S , Q \in \mathcal R$  with $\dim S = \dim Q$ and  $\bi SJ = \bi QJ$. 
  \lemref{refprop} (b) gives two source strata $S',Q' \in \SS$ of $S$ and $Q$ respectively, relatively to the refinement $I$. Then $\bi {S'}E   = \bi {Q'}E$ (cf. \eqref{eq:ayuda}) and
  $
\dim S' = \dim \bi {S'}I  =  \dim S =\dim Q= \dim \bi {Q'}I   = \dim Q'.
$
Applying  (K2)$_{\ov p}$ we get  $\ov p(S') = \ov p(Q')$. On the other hand,  \lemref{lem:I*p} gives 
 $I_\star\ov p(S) = \ov p(S')$, $I_\star\ov p(Q) = \ov p(Q')$ and therefore we get the claim
$I_\star\ov p(S) = I_\star\ov p(Q)$.
\epr

\subsection{Main results} We give the two invariance results of the various intersection (co)homologies: by coarsening and by refinement.

\bt[\bf Invariance by coarsening]\label{thm:Coars}
Let $(X,\SS) \triangleleft (X,\mathcal T)$ be a refinement between two CS-sets. For any $K$-perversity $\ov p$ on $(X,\SS)$ the identity
$I \colon X \to X$ induces the isomorphisms
\medskip

\begin{tabular}{lcclc}
\hbox{\rm (R1)}  &  $\lau H {\ov  p}* {X;\SS} \cong \lau H {I_\star\ov p} * {X;\T}$,
 & \color{white}............&
\hbox{\rm (R2)} & $\lau H  * {\ov  p} {X;\SS} \cong \lau H * {I_\star\ov p}  {X;\T}$,
\\[,2cm]
\hbox{\rm (R3)}  &  $\lau H * {\ov  p,c} {X;\SS} \cong \lau H * {I_\star\ov p,c}  {X;\T}$,
 & \color{white}............& 
\hbox{\rm (R4)}  &  $\lau \gH {\ov  p} *{X;\SS} \cong \lau \gH { I_\star\ov p} * {X;\T}$, 
\\[,2cm]
\hbox{\rm (R5)} & $ \lau  \gH * {\ov  p} {X;\SS} \cong \lau \gH * {I_\star\ov p}  {X;\T} $,
 & \color{white}............&
\hbox{\rm (R6)}  &  $\lau \gH * {\ov  p,c} {X;\SS} \cong \lau \gH * {I_\star\ov p,c}  {X;\T}$, 
\end{tabular}

\medskip

\noindent If in addition, $X$ is  second countable then 

\medskip

\begin{tabular}{lcclc}
\hbox{\rm (R7)}  &  $\lau H {BM,\ov  p}* {X;\SS} \cong \lau H {BM,  I_\star\ov p} * {X;\T}$, 
 & \color{white}......&
\hbox{\rm (R8)} & $\lau \gH {BM,\ov  p}* {X;\SS} \cong \lau \gH {BM,  I_\star\ov p} * {X;\T}$,
\\[,2cm]
\hbox{\rm (R9)}  &  $ \lau \IH * {I_\star \ov p} {X;\T} \cong \lau \IH*  {\ov p}  {X;\SS}$, 
& \color{white}......&
\hbox{\rm (R10)}&$ \lau  \IH * {\ov  p,c} {X;\SS} \cong \lau \H * {I_\star\ov p,c}  {X;\T}$
\end{tabular}
\et
\bpr
Notice first that the identity $I$ induces the morphisms (R1), \ldots, (R10). This comes from \lemref{ll}, \ref{subsec:perv}, Paragraph \ref{subsec:morfismo} and \lemref{masfuerte}.
We proceed in several steps.

\medskip

\emph{ (R2) and (R5)}. Apply  the Universal coefficient Theorem of \ref{mpti}.e to (R1) and (R4).

\medskip

\emph{ (R3) and (R6)}. Considering \eqref{iccs}  it suffices to 
prove that $I$ induces  the isomorphisms
  $\lau  H* {\ov  p} {X,X\menos K;\SS} \cong \lau H * {I_\star \ov p}  {X, X\menos K;\T}$ and   $\lau \gH *{\ov  p} {X,X\menos K;\SS} \cong \lau \gH { I_\star \ov p} * {X,X\menos K;\T}$, for each compact subset $K \subset X$. Properties (R2), (R5) and the long exact sequences  of \ref{mpti}.d give the result.

\medskip

\emph{ (R7) and (R8)}. 
Since $X$ second countable  then it is hemicompact (see \cite[Remark 1.3]{ST1}).
  Considering \eqref{limpro} it suffices to 
prove
  $\lau H {\ov  p}* {X,X\menos K\SS} \cong \lau H { I_\star \ov p} * {X, X\menos K;\T}$ and 
 $\lau \gH {\ov  p} *{X,X\menos K;\SS} \cong \lau \gH {I_\star \ov p} * {X,X\menos K;\T}$,
where $K$ is a compact subset of $X$. Properties (R1), (R4) and the long exact sequences  of \ref{mpti}.d give the result.

\medskip

\emph{ (R10)}.
Since $X$ second countable  then it is hemicompact, paracompact and therefore normal (see \cite[Remark 1.3]{ST1}, \cite[Theorem 20.10]{Wil}).
Considering   \propref{limiteiny}  it suffices to 
prove
   $\lau \IH * {\ov  p} {X,X\menos K;\SS} \cong \lau \IH * {I_\star \ov p}  {X, X\menos K;\T}$,
where $K \subset X$ is compact. Property (R9) and the long exact sequence  of \ref{mpbi}.d give the result.

 \medskip
 
\emph{ (R1), (R4) and (R9).} Without loss of generality we can suppose that the refinement is simple (cf. \propref{12} and \lemref{DI}).  
We verify the conditions of \propref{Met1},  for (R1) and (R4), and \propref{Met2}, for (R9).
Th functor $\Phi$ comes from $I\colon X \to X$.

\smallskip

\smallskip

(a) It suffices to consider the Mayer-Vietoris sequences of \ref{mpti}.a,  \ref{mpbmti}.a and \ref{mpbi}.a\footnote{Notice that $X$ is second countable, Hausdorff and locally compact (\remref{Uno}). Then, the pseudomanifold $X$ is  paracompact (cf. \cite[II.12.12]{Bre}).}.

\smallskip

(b) The chains  have compact support, so we get (R1) and (R4). The case (R9) is immediate. 

\smallskip

(d) Since  $\SS_U = \I$ implies $\T_U = \I$ then property (D) becomes a tautology.

\smallskip

(c) Consider a singular point $x \in X$. Following \remref{Uno} we distinguish three cases.

\medskip

\hspace{0cm} (C-a) {\boldmath $x \in S$}, {\bf source stratum of} {\boldmath $\SS$}.  Considering \propref{Max} (a) 
and using the local calculations \ref{mpti}.b and \ref{mpbi}.b, we need to prove
$$
\begin{array}{llll}
(R1) &  \lau H {\ov  p}* {L,\L}  {\cong}  \lau H {I_\star \ov p}* {L,\L'}  
 &\Longrightarrow&
 \lau H {\ov  p}* {\rc L,\rc\L}  {\cong}  \lau H {I_\star \ov p}* {\rc L,\rc\L'}
 \\[,3cm]
(R4) &  \lau \gH {\ov  p}* {L,\L}  {\cong}  \lau \gH {I_\star \ov p}* {L,\L'}  
 &\Longrightarrow&
 \lau \gH {\ov  p}* {\rc L,\rc\L}  {\cong}  \lau \gH {I_\star \ov p}* {\rc L,\rc\L'}
\\[,3cm]
(R9) & \lau \IH {\ov  p}* {L,\L}  {\cong}  \lau \IH {I_\star \ov p}* {L,\L'}  
 &\Longrightarrow&
 \lau \IH {\ov  p}* {\rc L,\rc\L}  {\cong}  \lau \IH {I_\star \ov p}* {\rc L,\rc\L'}.
 \end{array}
 $$
  Since the perversity $\ov p$ verifies $\ov p (S) = I_\star \ov p (\bi SI)$ (cf. \lemref{lem:I*p}) then we have 
  $\ov p(\tv) =I_\star \ov p(\tv)$ (cf. \eqref{eq:SVIp}). The result comes now directly from  the local calculations \ref{mpti}.b and \ref{mpbi}.b.

\medskip

\hspace{0cm}(C-b) {\boldmath$x \in S$, {\bf exceptional stratum of} {\boldmath $\SS$}}.  
Considering   \propref{Max} (b)  and using the local calculations \ref{mpti}.b and \ref{mpbi}.b, we need to prove 

\medskip

$
 (R1) \  \lau H{\ov  p} * {\rc S^{b-1},   \rc \I } {\cong} G, \hfill
  (R4) \ \lau \gH{\ov  p} * { \rc S^{b-1},   \rc \I }  {\cong} G, \hfill
    (R9) \     \lau \IH *{\ov  p}  { \rc S^{b-1},   \rc \I }  {\cong} R.
$
\medskip

 \noindent where 
 $b=\codim S \geq 1$. 
  Since  $0 \leq \ov p (S) \leq \ov t(S) =b-2$ (cf. \eqref{eq:regS}) then we have 
  $0 \leq \ov p(\tu) \leq b-2$ (cf. \eqref{eq:b}). The result comes now directly from  the local calculations \ref{mpti}.b and \ref{mpbi}.b.

\medskip

\hspace{0cm}(C-c) 
 {\boldmath$x \in S$, {\bf virtual stratum, with $\bi SI$ singular stratum of} {\boldmath $\SS$}}.  Considering \propref{Max} (c) and using the  local calculations \ref{mpti}.b and \ref{mpbi}.b,  we need to prove
$$
\begin{array}{llcl}
(R1) &  \lau H {  \ov p}* { \rc (\S^{b-1} *E),  \rc \EE_{\star b-1}}  & {\cong} &  
 \lau H { I_\star \ov p}{*} {\rc E,  \rc\EE}  
 \\[,3cm]
(R4) &  \lau \gH {\ov p}* {\rc (\S^{b-1} *E),  \rc \EE_{\star b-1}}  &{\cong} &  
 \lau \gH {I_\star \ov p}* { \rc E, \rc\EE}
\\[,3cm]
(R9) & \lau \IH *{\ov p}{ \rc (\S^{b-1} *E), \rc \EE_{\star b-1}}  &{\cong} &  
\lau \IH *{I_\star \ov p} { \rc E, \rc\EE},
 \end{array} 
 $$
 where
  $b= \dim \bi SI  - \dim S \geq 1$.
  
   Since $S \preceq (\bi SI\menos S)_{cc}$ (cf. \eqref{ST}) and $\ov p ((\bi SI\menos S)_{cc}) \leq   \ov p (S) \leq \ov p((\bi SI\menos S)_{cc})  + b $ (cf. \eqref{eq:magica}) then we have 
  $\ov p ((\S^{b-1})_{cc}) \leq   \ov p (\tu) \leq \ov p ((\S^{b-1})_{cc})  + b $
   (cf. \eqref{eq:SRQT}). Similarly, we get 
   $D\ov p ((\S^{b-1})_{cc}) \leq   D\ov p (\tu) \leq D\ov p ((\S^{b-1})_{cc})  + b $
   (cf. \eqref{eq:magicaBis}).
   
 \rojo On the other hand, we have $I_\star \ov p(\tw) = I_\star \ov p (\bi SI) =  \ov p ((\bi SI\menos S)_{cc}) =  \ov p(  \left(S^{b-1}\right)_{cc} ) $ (cf. \lemref{lem:I*p} and  \eqref{eq:SRQT})). Since $\dim (\R^a \times \S^{b-1} \times ]0,1[) = \dim( \R^{a+b} \times \{\tw\})$ then   $D I_\star \ov p(\tw) = D\ov p(  \left(S^{b-1}\right)_{cc} )$. 
   We conclude that 
    $$D I_\star \ov p(\tw)   \leq   D\ov p (\tu) \leq D I_\star \ov p(\tw)  + b .
    $$
     \negro
  
   Applying   the local calculations \ref{mpti}.b,c and \ref{mpbi}.b,c the question becomes
   
   \medskip
   
   $
   (R1) \  \lau H {  \ov p}* { E,   \EE }   {\cong}   
 \lau H { I_\star \ov p}{*} {E,  \EE}  , \hfill
(R4) \  \lau \gH {\ov p}* {E,  \EE}    {\cong}  
 \lau \gH {I_\star \ov p}* {E,  \EE}  \hfill
(R9) \  \lau \IH *{\ov p}{E,  \EE}    {\cong}   
\lau \IH *{I_\star \ov p} {E,  \EE}  ,
$

     \medskip
     
\noindent The stratum $S$ belongs to $\mathcal V = \mathcal M$ (cf. \eqref{eq:MV}). Since any other $R \in\SS$ meeting the conical chart $W$ verifies $S \prec R$ then $R$ is a source stratum and then $\ov p(R) = I_\star \ov p (R)$ (cf. \lemref{lem:I*p}). From \eqref{eq:SRQT} we get $\ov p = I_\star \ov p$ on $E$.
The claim is proved.
\epr

\begin{remarque} \label{rem:cod1excep}
{\rm 
The existence of 1-exceptional strata may impeach the above isomorphisms. This is the case for (R4), \dots (R10). For example
$
\lau \gH  {\ov 0} *{\rc S^0,\rc \I} = 0 \ne G = \lau \gH {\ov 0} * {]-1,1[,\I}.
$
But we have 
$
\lau H {\ov 0} * {\rc S^0} =  G = \lau H{\ov 0} *{]-1,1[,\I,\rc \I}.
$
In fact, the local calculations $\lau H {\ov p} 0{\rc S^{0},\SS}$ and $\lau \gH  {\ov p} 0 {\rc S^{0};\SS}$ are different:
$$
\lau H {\ov p} 0 { \rc S^{0},  \rc \I} =
\left\{
\begin{array}{cl}
\hiru H 0 {S^{0}} &\text{ if } D \ov{p}( \tv)  \geq 0 \\
G&\text{ if }  D \ov{p}( \tv) < 0
\end{array}\right.
\phantom{--}
\lau  \gH  {\ov{p}} 0 {  \rc S^{0}, \rc\I }=
\left\{
\begin{array}{cl}
\hiru H 0 {S^{0}}&\text{ if } D\ov{p}( \tv) \geq 0 
\\
0&\text{ if }  D \ov{p}( \tv) < 0.
\end{array}\right.
$$

 We observe that  condition $\lau \gH{\ov  p} * {\rc S^{0},   \rc \I } {\cong} G$ of (C-c) is never fulfilled, while   we just need $D \ov p(\tv) < 0$ to have $\lau H{\ov  p} * {\rc S^{0},   \rc \I } {\cong} G$ of (C-c).

 Condition \eqref{eq:magica} can be weakened
in cases (R1), (R2) and (R3) as follows: dealing with 1-exceptional strata $S$, it suffices to ask $D\ov p(S) < 0$, that is, $\ov p(S) \geq 0$ and not $0 \leq \ov p(S) \leq \ov t(S)$. So, these strata are allowed for (R1), (R2) and (R3).
}
\end{remarque}

\bt[\bf Invariance by refinement]\label{thm:refi}
Let $(X,\SS) \triangleleft (X,\mathcal T)$ be a refinement between two CS-sets. We suppose that there are no 1-exceptional strata. For any perversity $\ov q$ on $(X,\mathcal T)$ the identity 
$I \colon X \to X$ induces the isomorphisms

$$
\begin{array}{lclc}
\hbox{\rm (R1)}  &  \lau H {I^\star \ov  q}* {X;\SS} \cong \lau H {\ov q} * {X;\T} ,\phantom{--------}&
\hbox{\rm (R2)} & \lau H  * {I^\star \ov  q} {X;\SS} \cong \lau H * {\ov q}  {X;\T} ,
\\[,2cm]
\hbox{\rm (R3)}  &  \lau H * {I^\star \ov  q,c} {X;\SS} \cong \lau H * {\ov q,c}  {X;\T} ,\phantom{--------}&
\hbox{\rm (R4)}  &  \lau \gH {I^\star \ov  q} *{X;\SS} \cong \lau \gH { \ov q} * {X;\T},
\\[,2cm]
\hbox{\rm (R5)} & \lau  \gH * {I^\star \ov  q} {X;\SS} \cong \lau \gH * {\ov q}  {X;\T}   , \phantom{--------}&
\hbox{\rm (R6)}  &   \lau \gH * {I^\star \ov  q,c} {X;\SS} \cong \lau \gH * { \ov q,c}  {X;\T} .
\end{array}
$$
\noindent If in addition, $X$ is  second countable   then 
$$
\begin{array}{lclc}
\hbox{\rm (R7)}  & \lau H {BM,I^\star \ov  q}* {X;\SS} \cong \lau H {BM,  \ov q} * {X;\T}, 
 \phantom{---}&
\hbox{\rm (R8)} & \lau \gH {BM,I^\star \ov  q}* {X;\SS} \cong \lau \gH {BM,  \ov q} * {X;\T},
\\[,2cm]
\hbox{\rm (R9)}  &   \lau \IH * {I^\star \ov  q} {X;\T} \cong \lau \IH*  {\ov  q}  {X;\SS}, 
\phantom{---}&
\hbox{\rm (R10)}&  \lau  \IH * {I^\star\ov  q,c} {X;\SS} \cong \lau \H * { \ov q,c}  {X;\T}
\end{array}
$$
\et
\begin{proof}
 It suffices to apply  apply  \thmref{thm:Coars} to the perversity $I^\star \ov p$ (cf. Paragraph \ref{subsec:perv}), if this perversity is a K-perversity.
This is the case when 1-codimensional exceptional strata do not appear. Let us verify properties (K1) and (K2).

\smallskip

(K1) We have
$
I^\star \ov q (Q) = \ov q (\bi QI ) = \ov q (\bi SI) = I^\star \ov q(S) \leq  I^\star \ov q(Q) + \ov t (S) -\ov t(Q),
$
if we prove $\ov t(Q) \leq \ov t(S)$. This is clear if $S$ and $Q$ are regular strata or  singular strata at the same time (cf. (S4)). It remains the case where $S$ is an exceptional  stratum and $Q$ is a regular stratum. The inequality becomes $\ov t(S) \geq 0$, that is, $\codim S \geq 2$. This comes from the non-existence of 1-exceptional strata.

\smallskip

(K2) We have $I^\star \ov q(Q) = \ov q (\bi QI) = \ov q(\bi SI) = I^\star \ov q (S)$.
\end{proof}

In cases (R1), (R2) and (R3), 1-exceptional strata $S$ may appear if $\ov p(S)\geq 0$
(cf. \remref{rem:cod1excep}).

\subsection{Topological invariance}
One of the two more important properties of the intersection homology is the topological invariance \cite{MR572580}. 
Next Corollaries show that the refinement invariance implies topological invariance in some cases. We find the well known topological invariance of the intersection homology  \cite{MR572580} (see also \cite{MR800845,2019arXiv190406456F}) and those of tame intersection homology \cite{MR2209151} (closed supports) and  \cite{MR2507117} (compact supports).
We also get the topological invariance of the blown-up intersection cohomology 
\cite[Theorem G]{CST5} (closed  supports) and \cite[Theorem A]{CST7} (compact supports).

Before giving the result, there are two important tools  to highlight.

\smallskip

$\bullet$ \emph{Intrinsic stratification} (cf. \cite{MR0478169,MR800845}). Any stratified space $(X,\SS)$  has a smallest refinement: the 
\emph{intrinsic stratified space} $(X,\SS^*)$.
It is a canonical object:  we have $\SS^* ={\mathcal T}^*$ for any stratification $\mathcal T$ defined on $X$. 
If $(X,\SS)$ is a CS-set then $(X,\SS^*)$ is also a CS-set.

\smallskip

$\bullet$ \emph{Classical perversities versus M-perversities}. The former depend on the codimension of the strata while the latter are defined on the strata themselves.

A  {\em King perversity} is a map   $\ov p \colon \N \to \Z$ verifying $\ov p(0) =0 $ and $\ov p(k) \leq \ov p(k+1) \leq \ov p(k) +1 $ for each $k\in \N^*$ (cf. \cite{MR800845}). 
It verifies
\begin{equation}\label{eq:paso}
\ov p(k) \leq \ov p(\ell) \leq \ov p(k) +\ell -k ,
\end{equation}
if $1\leq k \leq \ell$.
 A King perversity $\ov p$ induces a perversity, still denoted by $\ov p$: $\ov p(S) = \ov p (\codim S)$.

A \emph{Goresky-MacPherson perversity} is a King perversity $\ov p$ with
 $
\ov p(0)=\ov p(1) =\ov p(2) = 0$ (cf. \cite{MR572580}).
It verifies, for each $k \geq 2$,
\begin{equation}\label{eq:pasot}
\ov 0 \leq \ov p(k) \leq k -2 = \ov t(k)
\end{equation}

\bc\label{Impl2}

Let  $(X,\SS)$ be a  CS-set endowed with a positive King perversity $\ov p$. 
Consider the intrinsic refinement $(X,\SS) \triangleleft_I (X,\SS^*)$. The identity map $I \colon X \to X$ induces the isomorphisms

\smallskip  

$\lau H {\ov p} *{X;\SS} \cong \lau H {\ov p} *{X;\SS^*}$
\hfill
$ \lau H *{\ov p} {X;\SS} \cong \lau H *{\ov p} {X;\SS^*}$
\hfill
$\lau H *{\ov p,c} {X;\SS} \cong  \lau H *{\ov p,c} {X;\SS^*}$,

\smallskip

\noindent if $\ov p (\ell) \geq 0$ when $\ell$ is the codimension of an exceptional stratum.
We also have

\smallskip

 $\lau \gH  {\ov p}* { X;\SS} \cong \lau \gH  {\ov p}* { X;\SS^*} $
 \hfill
 $\lau \gH * {\ov p} { X;\SS} \cong \lau \gH * {\ov p} { X;\SS^*}$
 \hfill
$  \lau \gH * {\ov p,c} { X;\SS} \cong \lau \gH * {\ov p,c} { X;\SS^*},$

  \smallskip
  
\noindent if  $0 \leq  \ov p(\ell) \leq  \ov t(\ell) $.
If in addition,  $X$ is second countable 
the we have

 \smallskip
 $
\lau \IH * {\ov p} {X;\SS} \cong \lau \IH * {\ov p} {X;\SS^*}$
\hfill
$\lau \IH  * {\ov p,c} {X;\SS} \cong \lau \IH  * {\ov p,c} {X;\SS^*}$
\hfill
$ \lau H {BM,\ov p} *{X;\SS} \cong \lau H {BM,\ov p} *{X;\SS^*}$
\ec

\bpr 
Let us verify that $\ov p$ is a $K$-perversity.

(K1) By definition of the perversity $\ov p$, we need to prove
\begin{equation*}\label{eq:desigprob}
\ov p(\codim Q)  \leq \ov p(\codim S)   \leq  \ov p(\codim Q)    + \ov t(\codim S) - \ov t(\codim Q).
\end{equation*} 
This is clear if $S, Q$ are regular strata or  singular strata (cf. (S4) and \eqref{eq:paso}). It remains the case where $S$ is an exceptional  stratum and $Q$ is a regular stratum. The inequality becomes $0  \leq \ov p(\codim S) \leq  \ov t(\codim S)$ which is true  from   hypothesis and \remref{rem:cod1excep}.

\smallskip

(K2) We have $\ov p(S) = \ov p  (\codim S) = \ov p(\codim Q) = \ov p (Q).
$

\medskip

The classical perversity $\ov p$ induces the perversity $\ov p$  on $(X,\SS)$ by formula $\ov p(S) = \ov p (\codim S)$.
In fact, the perversity $I_\star \ov p$ of $(X,\TT)$ also comes from the classical perversity $\ov p$:
$
I_\star \ov p(T ) \stackrel{\lemref{lem:I*p}} = \ov p(S) = \ov p (\codim S) \stackrel{source} = \ov p (\codim T) = \ov p(T),
$
where $T \in \TT$ and $S\in \SS$ is source stratum of $T$.
Now, it suffices to apply   \thmref{thm:Coars}. 
 \epr

\begin{remarque}\label{rem:class}
{\rm
(1) - Let  $(X,\SS)$ be a  CS-set endowed with a Goresky-MacPherson perversity $\ov p$.  Since $\ov p\geq \ov 0$ (cf. \eqref{eq:pasot}), then the previous Corollary implies that the cohomologies
$\lau H {\ov p} *{X;\SS}$, $\lau H *{\ov p} {X;\SS}$ and $\lau H *{\ov p,c} {X;\SS}$  
  are independent of the stratification $\SS$. We do not have a similar result for tame intersection homologies since condition $\ov 0 \leq \ov p\leq \ov t$ (cf. \eqref{eq:pasot}) implies that tame intersection homology coincides with the usual intersection homology.

Let us suppose that $X$ is second countable.
When 1-exceptional strata do not exist then we can apply the above Corollary 
and conclude that
the cohomologies
$\lau \IH * {\ov p} {X;\SS}$, $\lau \IH  * {\ov p,c} {X;\SS}$  and  $\lau H {BM,\ov p} *{X;\SS}$  
  are independent of the stratification $\SS$.
(cf. \eqref{eq:pasot}). 

(2) - Consider $\ov p$ a $K$-perversity. Condition (K2) means that the restriction of $\ov p$ to the $\SS$-stratification lying on each stratum $T \in \TT$ is in fact a classical perversity (excepted the condition $\ov p(0)= 0$). On the other hand, property (K1) is in fact a growing condition of the type \eqref{eq:paso}, even weaker. Although it is not completely exact, we can think a $K$-perversity as a perversity  whose restriction to any stratum $T \in \TT$ is a King perversity.
}\end{remarque}

\bibliographystyle{plain}

\bibliography{Biblio082017}

\end{document}